\newtheorem{thm}{Theorem}[section]
\newtheorem*{thm*}{Theorem}
\newtheorem{prop}[thm]{Proposition}
\newtheorem{defn}[thm]{Definition}
\newtheorem{lemma}[thm]{Lemma}
\newtheorem{cor}[thm]{Corollary}
\newtheorem{q}[thm]{Question}
\newtheorem{rmk}[thm]{Remark}
\newtheorem*{hypH}{Hypothesis H}
\numberwithin{equation}{subsection}
\newcommand{\C}{\mathbb{C}}
\newcommand{\R}{\mathbb{R}}
\newcommand{\Z}{\mathbb{Z}}
\newcommand{\Q}{\mathbb{Q}}
\newcommand{\N}{\mathbb{N}}
\newcommand{\F}{\mathcal{F}}
\newcommand{\bdry}{\partial}
\newcommand{\s}{\vskip.1in}
\newcommand{\n}{\noindent}
\newcommand{\op}{\operatorname}
\newcommand{\bs}{\boldsymbol}
\newcommand*\bigcdot{\mathpalette\bigcdot@{.5}}
\newcommand*\bigcdot@[2]{\mathbin{\vcenter{\hbox{\scalebox{#2}{$\m@th#1\bullet$}}}}}
\newcommand{\be}{\begin{enumerate}}
\newcommand{\ee}{\end{enumerate}}
\begin{document}

\title[Topological entropy for Reeb vector fields]{Topological entropy for Reeb vector fields in dimension three via open book decompositions}

\author{Marcelo R.R. Alves}
\address{Institut de Math\'ematiques,
Universit\'e de Neuch\^atel,
Rue \'Emile Argand 11,
CP~158,
2000 Neuch\^atel,
Switzerland}
\email{marcelo.ribeiro@unine.ch} \urladdr{}

\author{Vincent Colin}
\address{Universit\'e de Nantes, UMR 6629 du CNRS, 44322 Nantes, France}
\email{vincent.colin@univ-nantes.fr}

\author{Ko Honda}
\address{University of California, Los Angeles, Los Angeles, CA 90095}
\email{honda@math.ucla.edu} \urladdr{http://www.math.ucla.edu/\char126 honda}

\date{}

\keywords{topological entropy, contact structure, open book decomposition, mapping
class group, Reeb dynamics, pseudo-Anosov, contact homology.}

\subjclass[2010]{Primary 57M50, 37B40; Secondary 53C15.}

\thanks{MA supported by the Swiss National Foundation. VC supported by the ERC G\'eodycon and ANR Quantact. KH supported by NSF Grant DMS-1406564.  }

\begin{abstract}

Given an open book decomposition of a contact three manifold $(M,\xi )$ with pseudo-Anosov monodromy and fractional Dehn twist coefficient $c={k\over n}$,
we construct a Legendrian knot $\Lambda$ close to the stable foliation of a page, together with a small Legendrian pushoff $\widehat{\Lambda}$. When $k\geq 5$,
we apply the techniques of \cite{CH2} to show that the strip Legendrian contact homology of $\Lambda \rightarrow\widehat{\Lambda}$  is well-defined and has an exponential growth property. The work \cite{Al} then implies that all Reeb vector fields for $\xi$ have positive topological entropy.
\end{abstract}

\maketitle

\tableofcontents

\section{Introduction}

In this paper we combine the techniques of \cite{CH2} and \cite{Al} to obtain results on the topological entropy of a large class of contact $3$-manifolds.

\begin{thm}\label{thm: main}
Let $(M,\xi )$ be a closed cooriented three-dimensional contact manifold which admits a supporting open book decomposition whose binding is connected and whose monodromy is isotopic to a pseudo-Anosov homeomorphism with fractional Dehn twist coefficient $\frac{k}{n}$. If $k \geq 5$ then every Reeb vector field for $\xi$ has positive topological entropy.
\end{thm}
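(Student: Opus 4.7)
The plan follows the strategy outlined in the abstract: build a Legendrian pair from the pseudo-Anosov data whose strip Legendrian contact homology has exponential growth, and invoke the main theorem of \cite{Al} to conclude that every Reeb vector field for $\xi$ has positive topological entropy. One only needs to produce this exponential growth for some convenient contact form, since strip Legendrian contact homology is an invariant of $\xi$ together with the Legendrian isotopy class, not of the particular defining form.

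First I would fix a contact form $\alpha$ supporting $\xi$ that is adapted to the open book $(B,\pi)$ in the sense that the Reeb vector field is tangent to $B$ and positively transverse to the interior of each page, with first return map isotopic to the pseudo-Anosov representative $\phi$ of the monodromy. The stable measured foliation $\F^s$ of $\phi$ has finitely many pronged singularities away from $B$. Following the recipe of \cite{CH2}, I would pick a regular leaf of $\F^s$ (or a properly embedded arc closely approximating one), Legendrian-lift it to a knot $\Lambda \subset (M,\xi)$ that is $C^0$-close to the leaf inside a page, and let $\widehat{\Lambda}$ be a small Legendrian pushoff transverse to the pages. Reeb chords from $\Lambda$ to $\widehat{\Lambda}$ then correspond page-wise to intersection points between $\Lambda$ and the iterates $\phi^n(\Lambda)$; the pseudo-Anosov nature of $\phi$ forces the number of such intersections to grow exponentially in $n$, which at the chain level gives an exponential lower bound on the count of chords of bounded action.

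The technical heart of the argument is to promote this chain-level exponential growth to an analogous statement in homology, which requires that the strip Legendrian contact homology of $\Lambda \to \widehat{\Lambda}$ be well-defined and invariant under changes of contact form and compatible almost complex structure. This is where the hypothesis $k \geq 5$ enters: as in \cite{CH2}, sufficiently large fractional Dehn twisting along the binding is needed to obtain a priori area and energy bounds on pseudo-holomorphic strips with boundary on $\Lambda \cup \widehat{\Lambda}$, to confine such strips to the region where the pseudo-Anosov dynamics dominates, and to rule out bad breakings into short Reeb orbits encircling $B$ that would obstruct $\bdry^2 = 0$ and the invariance maps. I expect this step --- adapting the compactness, transversality, and neck-stretching arguments of \cite{CH2} to the strip setting under the sharp numerical threshold $k \geq 5$ --- to be the main obstacle. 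Once it is in place, a standard Floer continuation argument transfers the exponential growth of intersection counts to an exponential lower bound for the growth rate of strip Legendrian contact homology with action, and the main theorem of \cite{Al} then immediately yields positive topological entropy for every Reeb vector field associated to $\xi$.
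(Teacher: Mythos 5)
Your proposal captures the right high-level strategy, but it contains a genuine gap that the paper spends most of Section 6 filling: you assume that it suffices to exhibit exponential growth of strip Legendrian contact homology for \emph{one} conveniently chosen contact form and then invoke invariance, together with \cite[Theorem 1]{Al}. That route is not available here. The control of pseudoholomorphic curves that makes strip LCH well-defined and makes the differential count strips staying in the mapping torus (Theorems~\ref{thm: once-punctured disks} and \ref{thm: strips}) only holds \emph{up to an action bound $N$ depending on how small $\epsilon,\epsilon'$ are}. There is no single contact form $\alpha_{\epsilon,\epsilon'}$ for which these statements hold for all actions; one gets instead a sequence $(\alpha_i,L_i)$ with control only below the level $L_i$. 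This is precisely why the paper proves Theorem~\ref{thm: criterion}, a strict generalization of \cite[Theorem 1]{Al} phrased in terms of an \emph{exhaustive sequence} of forms and truncated homologies $LCH_{st}^{\leq L_i}$, together with cobordism maps $\Psi_{\mathcal{W}^i_j}$ that are injective after shrinking the action window by a fixed factor. The direct-limit bookkeeping in Lemmas~\ref{lemma: sequence of contact forms} and \ref{lemma: criterion satisfied} (and Proposition~\ref{growth}) is the actual content of the argument; "standard continuation" applied to a single form, which your proposal invokes, does not apply.

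A second, more minor, imprecision: $\Lambda$ is not the Legendrian lift of a regular leaf or an arc approximating one. Lemma~\ref{lemma: technical} constructs an immersed \emph{closed} curve $L$ in the page, a concatenation of a short transverse arc $\delta_0$ and a long tangent arc $\delta_1$ together with a finger move around an elliptic singularity of $\beta$ in $N(\partial S)$, arranged so that (A) $\int_L\beta=0$ (so $L$ lifts to a closed Legendrian), (B) the Rademacher function $\Phi$ vanishes on every immersed arc of $L$ (this is what makes the $\Phi$-count in the proofs of Theorems~\ref{thm: once-punctured disks} and \ref{thm: strips} go through with the threshold $k\geq 5$), and (C) a $\pi_1$-injectivity condition ruling out once-punctured disks disjoint from the binding. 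Your sketch correctly identifies that $k\geq 5$ should control holomorphic strips near the binding, but the actual mechanism is the extra ``leaking'' of $\Phi$ along the two boundary arcs of the strip in $\R\times\Lambda$ and $\R\times\widehat\Lambda$, each costing one unit over the cylindrical case of \cite{CH2} (where $k\geq 3$ sufficed), not energy bounds per se.
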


In order to explain the significance of our result we start by recalling some notions. A \textit{cooriented contact structure} on a $3$-manifold $M$ is a plane field $\xi$  given by $\xi = \ker \alpha$ for a $1$-form $\alpha$ with $\alpha \wedge d\alpha > 0$. Such an $\alpha$ is called a \textit{contact form} on $(M,\xi)$, and it determines a \textit{Reeb vector field} $R_{\alpha}$ defined by $\iota_{R_{\alpha}}d\alpha = 0$, $\alpha(R_{\alpha})= 1$. Denote the \textit{Reeb flow of $R_\alpha$} by $\phi_{\alpha}= (\phi_{\alpha}^t)_{t\in \R}$. A \textit{Legendrian} submanifold of $(M,\xi)$ is a $1$-dimensional submanifold of $M$ everywhere tangent to $\xi$.  {\em In what follows we assume that our contact structures are cooriented.}

The {\em topological entropy $h_{top}$} is a nonnegative number that one associates to a dynamical system and which measures its complexity.  We briefly review its definition from \cite{Bo} for a flow $\phi=(\phi_t)_{t\in \R}$. Fix a metric $d$ on $M$.  Given $T>0$ and $x,y\in M$, define
$$d_T(x,y)=\max_{t\in[0,T]} d(\phi^t(x),\phi^t(y)).$$
A subset $X\subset M$ is {\em $(T,\epsilon)$-separated} if for all $x,y\in X$ we have $d_T(x,y)>\epsilon$. Writing $N(T,\epsilon)$ for the maximum cardinality of a $(T,\epsilon)$-separated subset of $M$, we define
$$h_{top}(\phi)=\lim_{\epsilon\to 0}\limsup_{T\to +\infty} \frac{\log N(T,\epsilon)}{T}.$$

The positivity of the topological entropy for a dynamical system implies some type of exponential instability for that system. For $3$-dimensional flows, the positivity of $h_{top}$ has the following striking dynamical consequence due to Katok \cite{Katok,K2} and Lima and Sarig \cite{LS,S}:

\begin{thm*}
Let $\phi$ be a smooth flow on a closed oriented $3$-manifold generated by a nonvanishing vector field. If $h_{top}(\phi)>0$, then there exists a Smale ``horseshoe'' as a subsystem of the flow. As a consequence, the number of hyperbolic periodic orbits of $\phi$ grows exponentially with respect to the period.
\end{thm*}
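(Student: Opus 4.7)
The plan is to combine the variational principle for topological entropy with Katok's theory of nonuniform hyperbolicity, together with Lima--Sarig's symbolic coding of smooth three-dimensional flows. First, by the variational principle, the hypothesis $h_{top}(\phi)>0$ produces an ergodic $\phi$-invariant Borel probability measure $\mu$ with positive metric entropy $h_\mu(\phi)>0$. Since $M$ is three-dimensional and the flow direction always contributes a zero Lyapunov exponent, Ruelle's inequality applied to a time-one map, together with $h_\mu>0$, forces at least one of the two remaining Lyapunov exponents to be strictly positive; since the generating vector field is nonvanishing and one may work in a volume class, the other is strictly negative. Thus $\mu$ is a hyperbolic measure in the sense of Pesin theory.

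Second, I would invoke Katok's approximation theorem for $C^{1+\alpha}$ diffeomorphisms of surfaces: every hyperbolic ergodic measure of positive entropy is accumulated, both in the weak-$\ast$ topology and in entropy, by invariant measures supported on compact, locally maximal, topologically transitive uniformly hyperbolic sets. On such a set the dynamics is conjugate to a subshift of finite type with positive topological entropy, which is the Smale horseshoe in the conclusion. The remaining task is to transfer this statement from a two-dimensional diffeomorphism to the three-dimensional flow.

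The transfer is the main obstacle. If the flow admitted a global cross section, one could apply Katok directly to the Poincar\'e return map, but for a general Reeb flow no such section exists. This is precisely where the work of Lima and Sarig enters: they construct a countable Markov partition for any $C^{1+\alpha}$ flow on a closed three-manifold carrying a hyperbolic invariant measure, modeled on Sarig's earlier construction for surface diffeomorphisms. The flow then becomes (measurably, on a set of full $\mu$-measure) a suspension over a topological Markov shift with H\"older roof function, whose base carries a hyperbolic ergodic measure of positive entropy. Katok's theorem applied to the base yields a compact uniformly hyperbolic horseshoe, whose suspension is the desired horseshoe subsystem of $\phi$.

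Finally, once the horseshoe is in hand the count of hyperbolic periodic orbits is immediate from symbolic dynamics: a transitive subshift of finite type with topological entropy $h>0$ has at least $e^{(h-\varepsilon)n}$ periodic words of length $n$ for every $\varepsilon>0$ and all sufficiently large $n$, and on a locally maximal uniformly hyperbolic invariant set of a flow the return time function is bounded above and below by positive constants. Periodic words of length $n$ thus correspond to hyperbolic periodic orbits of $\phi$ whose periods lie in a linear window around $n$, so the number of hyperbolic periodic orbits of period at most $T$ grows exponentially in $T$.
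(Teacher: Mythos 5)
The paper does not prove this theorem; it is quoted as a known result of Katok and Lima--Sarig (\cite{Katok,K2,LS,S}), so there is no in-paper argument to compare against. Your proposal is, in outline, a faithful reconstruction of the argument in those references: variational principle, Pesin-theoretic hyperbolicity of the measure, Lima--Sarig symbolic dynamics, and a symbolic count of periodic orbits. Two steps deserve correction, though.

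First, the deduction that $\mu$ is hyperbolic is not right as written. You claim that since the generating vector field is nonvanishing ``one may work in a volume class,'' hence the third Lyapunov exponent is strictly negative. A nonvanishing vector field on a closed $3$-manifold need not preserve any volume form, so there is no invariant volume to appeal to, and the sum of the Lyapunov exponents has no reason to vanish. (Reeb flows do preserve $\alpha\wedge d\alpha$, but the theorem as stated is for arbitrary smooth nonvanishing flows.) The correct argument applies Ruelle's inequality twice, to $\phi^{1}$ and to $\phi^{-1}$: positivity of $h_\mu$ forces at least one strictly positive exponent for $\phi^{1}$, and time reversal forces at least one strictly negative one. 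Since the flow direction contributes the zero exponent, the remaining two are nonzero of opposite signs, so $\mu$ is hyperbolic.

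Second, once the Lima--Sarig coding is in place (a finite-to-one factor map from a suspension of a countable topological Markov shift, with bounded roof function, onto an invariant set of full $\mu$-measure), you should not then ``apply Katok's theorem to the base.'' The base is a countable Markov shift, not a closed surface with a $C^{1+\alpha}$ diffeomorphism, so Katok's approximation theorem does not apply to it. What one uses instead is the elementary fact that a transitive countable Markov shift of positive Gurevich entropy contains finite transitive subshifts of finite type of entropy arbitrarily close; suspending such a subshift with the (bounded) roof function gives the compact invariant set semi-conjugate to a suspension of a finite shift, i.e.\ the horseshoe. Your final periodic-orbit count, using boundedness of the roof function to convert word length into a linear window of periods, then goes through as stated.
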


A ``horseshoe'' is a compact invariant set where the dynamics is semi-conjugate to that of the suspension of a finite shift by a finite-to-one map and is considered to be the prototypical example of chaotic dynamics; see \cite{KH}.
Combining Theorem \ref{thm: main} with this result we obtain the following corollary which is a strengthening of a result proved in \cite{CH2}.

\begin{cor}
Let $(M,\xi )$ be a closed contact $3$-manifold which admits a supporting open book decomposition whose binding is connected and whose monodromy is isotopic to a pseudo-Anosov homeomorphism with fractional Dehn twist coefficient $\frac{k}{n}$. If $k\geq 5$, then for every Reeb flow $\phi_\alpha$ on $(M,\xi)$ there exists a ``horseshoe" as a subsystem of $\phi_\alpha$.  In particular the number of hyperbolic periodic orbits of $\phi_\alpha$ grows exponentially with respect to the period, even if $\alpha$ is not generic.
\end{cor}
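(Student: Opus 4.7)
The corollary is essentially a direct concatenation of Theorem~\ref{thm: main} with the Katok--Lima--Sarig theorem quoted immediately before the statement, so my plan is to verify that the hypotheses of that theorem are met by every Reeb flow $\phi_\alpha$ on $(M,\xi)$ and then invoke it.

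First I would apply Theorem~\ref{thm: main}: since $(M,\xi)$ admits a supporting open book decomposition with connected binding and pseudo-Anosov monodromy of fractional Dehn twist coefficient $k/n$ with $k\geq 5$, every Reeb vector field $R_\alpha$ for $\xi$ has $h_{top}(\phi_\alpha)>0$. Next I would check the remaining hypotheses of the Katok--Lima--Sarig theorem. The Reeb vector field $R_\alpha$ is smooth (as $\alpha$ is a smooth contact form) and nonvanishing, because the normalization $\alpha(R_\alpha)=1$ forces $R_\alpha$ to be nowhere zero. The ambient manifold $M$ is closed by assumption, and it is orientable: the cooriented contact structure $\xi$ together with the coorientation of $\xi$ given by $\alpha$ orients $M$ via the volume form $\alpha\wedge d\alpha>0$.

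With all hypotheses verified, the Katok--Lima--Sarig theorem applied to $\phi_\alpha$ yields a compact invariant subset on which the dynamics is semi-conjugate, via a finite-to-one map, to the suspension of a finite shift; this is the desired horseshoe subsystem of $\phi_\alpha$. The ``in particular'' clause then follows from the standard fact about shifts: a finite shift on $N\geq 2$ symbols has exponentially many periodic orbits, and these correspond, via the semi-conjugacy, to hyperbolic periodic orbits of the suspension flow whose periods grow linearly while their number grows exponentially. The finite-to-one nature of the semi-conjugacy ensures only a bounded loss in counting, so exponential growth of hyperbolic periodic orbits of $\phi_\alpha$ with respect to the period is preserved. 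Crucially, the Katok--Lima--Sarig statement is valid without any genericity hypothesis on the flow, which is why the conclusion holds even when $\alpha$ is not generic.

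There is essentially no obstacle in this argument once Theorem~\ref{thm: main} is granted; the only minor subtlety worth flagging is the orientability of $M$, which is sometimes included implicitly in ``contact $3$-manifold'' but which I would prefer to make explicit since the Katok--Lima--Sarig statement requires it. Everything else is formal.
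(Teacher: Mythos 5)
Your proposal is correct and coincides with the paper's (one-line) argument: the corollary is obtained by combining Theorem~\ref{thm: main} with the Katok--Lima--Sarig theorem quoted just before it. Your verification that $R_\alpha$ is smooth and nonvanishing and that the cooriented contact structure orients $M$ is exactly the routine hypothesis-check the paper leaves implicit.
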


Motivated by results on topological entropy for geodesic flows, Macarini and Schlenk used the geometric ideas of \cite{FS1,FS2} to prove in \cite{MS} that if $Q$ is a surface with genus $\geq 2$, then on the unit cotangent bundle $(S^{*}Q, \xi)$ equipped with the canonical contact structure $\xi$ every Reeb flow has positive topological entropy. In previous works \cite{A,Al,ReebAnosov}, the first author discovered an abundance of examples of contact $3$-manifolds with positive entropy: there exist hyperbolic contact $3$-manifolds, nonfillable contact $3$-manifolds, as well as $3$-manifolds with infinitely many nondiffeomorphic contact structures for which every Reeb flow has positive topological entropy.

Since every contact $3$-manifold admits a supporting open book decomposition with pseudo-Anosov monodromy, connected binding, and $k\geq 1$ by \cite{CH1}, Theorem \ref{thm: main} can be interpreted as saying that for ``almost all'' (apart from $k=1,\dots,4$) tight contact $3$-manifolds every Reeb flow has positive topological entropy.

Recall that the second and third authors had previously shown in \cite{CH2} that, when $k\geq 3$, for every nondegenerate Reeb vector field the number of periodic orbits grows exponentially with the action and in the degenerate case the total number of periodic orbits is infinite. The proof uses contact homology as follows:
\begin{enumerate}
\item Construct a contact form $\alpha$ on $M$ supported by the open book decomposition, whose Reeb vector field $R=R_\alpha$ behaves well with respect to the pseudo-Anosov monodromy.
\item Prove that, when $k\geq 2$, the symplectization $(\R\times M, J)$ of $(M,\alpha)$ contains no $J$-holomorphic plane asymptotic to a periodic orbit of $R$ at $+\infty$.
\item Prove that, when $k\geq 3$, index one $J$-holomorphic cylinders between periodic orbits of $R$ cannot intersect the trivial cylinder over the binding.
\end{enumerate}

Step (2) implies that the cylindrical contact homology of $(M,\alpha)$ is well-defined. By Step (3), a $J$-holomorphic cylinder must join two orbits in the same Nielsen class. The key point in the proofs of Steps (2) and (3) is that every $J$-holomorphic curve in $\R\times M$ must intersect the $J$-holomorphic trivial cylinder over the binding positively. By using a suitable Rademacher function $\Phi$ (cf.\ Section~\ref{subsection: Rademacher function}), one can show that the intersection must be empty as soon as $k$ is sufficiently large. One finally uses properties of diffeomorphisms isotopic to pseudo-Anosov homeomorphisms: in every Nielsen class of periodic points, the total Lefschetz index is $-1$ and the number of Nielsen classes grows exponentially with the period. This is enough to conclude that the dimension of cylindrical contact homology generated by periodic orbits of action less than $T$ grows exponentially with $T$. Invariance properties of the set of linearized contact homologies yield the result for arbitrary contact forms.

To prove Theorem \ref{thm: main} we follow the same strategy combined with the following criterion which generalizes \cite[Theorem 1]{Al}. 

\begin{thm}\label{thm: criterion}
Let $(M,\xi =\ker \alpha_0)$ be a closed contact manifold and $\Lambda$ and $\widehat{\Lambda}$ be a pair of disjoint Legendrian knots in $(M,\xi)$. If there exists an exhaustive sequence $(\alpha_i=G_i\alpha_0,L_i)$ of contact forms and actions such that
\be
\item $G_i$ is uniformly bounded above by a constant ${\frak c}\geq 1$ and below by ${1\over {\frak c}}$;
\item the strip Legendrian contact homologies $LCH_{st}^{\leq L_i}(\alpha_i, \Lambda \rightarrow\widehat{\Lambda})$ are well-defined and grow exponentially with the action in the following sense: there exist numbers $a>0$ and $b$ such that
$$\dim(LCH_{st}^{\leq \ell}(\alpha_i,\Lambda \rightarrow\widehat{\Lambda})\geq e^{a\ell +b}$$
for each $i\in\N$ and $\ell\leq L_i$; and
\item for each $i\leq j$ there are induced cobordism maps
$$\Psi_{\mathcal{W}^i_j}: LCH_{st}^{\leq L_i}(\alpha_i, \Lambda \rightarrow\widehat{\Lambda})\to LCH_{st}^{\leq L_j}(\alpha_j, \Lambda \rightarrow\widehat{\Lambda}),$$
that are injective on $LCH_{st}^{\leq L_i/{\frak d}}(\alpha_i, \Lambda \rightarrow\widehat{\Lambda})$, where ${\frak d}\geq 1$ is independent of $i,j$,
\ee
then every Reeb flow on $(M,\xi)$ has positive topological entropy.
\end{thm}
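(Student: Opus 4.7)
The strategy is to deduce, from the hypotheses on the exhaustive sequence $(\alpha_i,L_i)$, exponential growth of the number of Reeb chords from $\Lambda$ to $\widehat\Lambda$ with respect to \emph{any} chosen contact form for $\xi$, and then invoke a theorem (proved in \cite{Al}) that such exponential growth forces the Reeb flow to have positive topological entropy. So fix an arbitrary contact form $\beta=h\alpha_0$ for $\xi$ whose flow we wish to analyze. Since $M$ is compact and the $G_i$ are pinched between $1/\mathfrak{c}$ and $\mathfrak{c}$ by condition (1), there are constants $\kappa_1,\kappa_2>0$, depending on $\beta$ but independent of $i$, such that the $\beta$-action and the $\alpha_i$-action of any Reeb chord, or of any $J$-holomorphic strip asymptotic to such chords, differ multiplicatively by factors lying in $[\kappa_1,\kappa_2]$.

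For each pair $i\le j$, present the exact symplectic cobordism $\mathcal{W}^i_j$ from $\alpha_i$ (top) to $\alpha_j$ (bottom) as the concatenation of a cobordism $\mathcal{W}^{i,\beta}$ from $\alpha_i$ to $\beta$ and a cobordism $\mathcal{W}^{\beta,j}$ from $\beta$ to $\alpha_j$, and apply SFT-type compactness (in the spirit of BEHWZ) to a family of admissible almost complex structures that stretches the neck along the $\beta$-level. The resulting chain map factors as
\[
\psi_{\mathcal{W}^i_j}\ =\ \psi_{\mathcal{W}^{\beta,j}}\circ \psi_{\mathcal{W}^{i,\beta}},
\]
where the intermediate chain group is freely generated by Reeb chords of $\beta$ from $\Lambda$ to $\widehat\Lambda$. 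Note that no strip LCH differential for $\beta$ needs to exist: the composition is chain-level, and only an upper bound on the rank of $\psi_{\mathcal{W}^i_j}$ in terms of the dimension of the middle group is required. Combined with the uniform action comparison from the previous paragraph, this gives
\[
\mathrm{rank}\bigl(\Psi_{\mathcal{W}^i_j}|_{LCH_{st}^{\le L_i/\mathfrak{d}}(\alpha_i)}\bigr)\ \le\ \#\bigl\{\text{Reeb chords of }\beta\text{ from }\Lambda\text{ to }\widehat\Lambda\text{ of action }\le \kappa\, L_i/\mathfrak{d}\bigr\},
\]
for a constant $\kappa$ depending only on $\mathfrak{c},\mathfrak{d},h$.

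By conditions (2) and (3) the left-hand side is at least $e^{aL_i/\mathfrak{d}+b}$. Letting $i\to\infty$, since the sequence $(\alpha_i,L_i)$ is exhaustive and $L_i\to\infty$, we obtain exponential growth in the action of the Reeb chord count from $\Lambda$ to $\widehat\Lambda$ for the form $\beta$. This is precisely the hypothesis of \cite[Theorem~1]{Al} (or its strip-version proved there in the same fashion), whose conclusion is that $h_{top}(\phi_\beta)>0$. As $\beta$ was arbitrary, this completes the proof.

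The main obstacle is the neck-stretching factorization: one must check that SFT compactness applies to the strips counted by $\psi_{\mathcal{W}^i_j}$ under a $\beta$-level neck stretch in spite of the possibly non-generic Reeb dynamics of $\beta$, and that subsequential limits break into configurations whose $\beta$-chord asymptotes have controlled action. Once this geometric compactness is established, the remaining arguments are dimension-counting and an appeal to the chord-growth-implies-entropy theorem of \cite{Al}.
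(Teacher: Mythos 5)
Your core idea matches the paper's: factor the cobordism map $\Psi_{\mathcal{W}^i_j}$ through an intermediate level corresponding to the contact form of interest, use conditions (2) and (3) to bound the rank of $\Psi_{\mathcal{W}^i_j}$ from below, and read off exponential growth of the Reeb chord count at the intermediate level. (The paper does this in Proposition~\ref{growth} via a pullback augmentation and linearized homology at the middle level, whereas you observe that the middle group only needs to exist as a chain group; that is a reasonable shortcut.) However, there is a genuine gap where you try to run this directly for an arbitrary $\beta$: if $\beta$ is degenerate (or merely not $\Lambda\cup\widehat\Lambda$-nondegenerate), the set $\mathcal{T}^{\leq\cdot}_\beta(\Lambda\to\widehat\Lambda)$ need not be finite or even discrete, SFT compactness under the $\beta$-level neck stretch is not available in the required form, and the chord count on the right-hand side of your rank inequality is not well-defined. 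You flag this in your last paragraph, but you do not resolve it, and it cannot be waved away.

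The paper handles this in two steps that are absent from your proposal. First, even in the nondegenerate case, $\beta$ need not be $\Lambda\cup\widehat\Lambda$-nondegenerate; the paper fixes this by the Legendrian-fibration perturbation from \cite[Lemma 3]{Al}, replacing $\widehat\Lambda$ by nearby pushoffs $\widehat\Lambda_{(x,y)}$ with $(x,y)$ ranging over a full-measure set of a disk, obtaining $\Lambda\cup\widehat\Lambda_{(x,y)}$-nondegeneracy and applying Proposition~\ref{growth} to $\widetilde\phi_{(x,y)}^*\beta$; the argument of \cite[Theorem 1]{Al} then converts chord growth for a full-measure family of pushoffs into a volume-growth estimate and hence an entropy bound. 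Second, the degenerate case is treated not by neck-stretching at all but by the continuity of $h_{top}$ under $C^\infty$-perturbation of the contact form (Newhouse and Katok), combined with the density of $\Lambda\cup\widehat\Lambda$-nondegenerate forms and the explicit lower bound $h_{top}(\phi_\alpha)\geq a/\mathsf{K}_\alpha$. You would need to incorporate both of these ingredients to close your argument.
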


We refer the reader to Section~\ref{section: review of contact homology} for the definition of strip Legendrian contact homology and Definition~\ref{defn: exhaustive} for the definition of an exhaustive sequence.

By Theorem~\ref{thm: criterion} it suffices to find a pair of suitable Legendrian knots $\Lambda$ and $\widehat{\Lambda}$ that satisfy the conditions of Theorem~\ref{thm: criterion}. The knot $\Lambda$ will be constructed in a neighborhood of a page of the open book as the Legendrian lift of an immersed curve $L$ in the page which is close to the stable invariant foliation of the pseudo-Anosov representative of the monodromy. It will have the property that the value of the Rademacher function $\Phi$ is $0$ on every arc immersed in $L$. The knot $\widehat{\Lambda}$ is a small pushoff of $\Lambda$ in the Reeb direction.  The control on $J$-holomorphic curves required in Theorem~\ref{thm: criterion} is then obtained using the method from Steps (2) and (3) of the absolute case, provided $k$ is large enough to take care of the extra leaking coming from the parts in the boundary of potential $J$-holomorphic curves lying in $\R\times\Lambda$ and $\R\times \widehat{\Lambda}$.

\begin{cor}
Every closed contact three-dimensional manifold $(M,\xi)$ admits a (tight) degree five branched cover along a transverse knot on which every Reeb flow has positive topological entropy.
\end{cor}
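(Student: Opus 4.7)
The plan is to apply Theorem~\ref{thm: main} to a $5$-fold cyclic branched cover of $(M,\xi)$ along the binding of a suitable supporting open book. By the theorem of Colin-Honda \cite{CH1}, $(M,\xi)$ admits a supporting open book with connected binding $B$, pseudo-Anosov monodromy $\phi$, and fractional Dehn twist coefficient $c(\phi)=k/n$ with $k\geq 1$. Since $B$ is a positively transverse knot in $(M,\xi)$, one forms the degree $5$ cyclic branched cover $p:(\widetilde M,\widetilde\xi)\to(M,\xi)$ along $B$ by pulling back and smoothing the contact form near the branch locus. The obstruction to the cyclic cover of $M\setminus B$ extending across $B$ vanishes because the meridian of $B$ maps to $1\in\Z$ under the fibration of the open book, hence to a generator of $\Z/5$.

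The main technical step is to identify the induced supporting open book on $(\widetilde M,\widetilde\xi)$ and to compute its FDTC. Define $\widetilde\pi:\widetilde M\setminus\widetilde B\to S^1$ to be the unique circle-valued map satisfying $\pi\circ p=5\widetilde\pi$, where $\pi:M\setminus B\to S^1$ is the fibration of the original open book. Then $\widetilde B=p^{-1}(B)$ is a single circle, so the new binding is still connected; $\widetilde\pi$ provides a supporting open book on $(\widetilde M,\widetilde\xi)$; and each page $\widetilde\Sigma_\theta$ is carried diffeomorphically by $p$ onto the page $\Sigma_{5\theta}$ downstairs. A single loop in the base of $\widetilde\pi$ lifts under $p$ to a fivefold loop in the base of $\pi$, so under these identifications the monodromy $\widetilde\phi$ of the new open book is conjugate to $\phi^5$. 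In particular $\widetilde\phi$ is pseudo-Anosov, and because the FDTC is homogeneous under iteration we obtain $c(\widetilde\phi)=5k/n$, whose numerator is $\geq 5$.

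Theorem~\ref{thm: main} then applies to $(\widetilde M,\widetilde\xi)$ and delivers positive topological entropy for every Reeb vector field on $\widetilde\xi$. Tightness of $\widetilde\xi$ is a consequence: by the theorem quoted in the introduction, positive topological entropy forces infinitely many hyperbolic periodic orbits, whereas every overtwisted contact $3$-manifold admits Reeb vector fields with only finitely many periodic orbits, so $\widetilde\xi$ cannot be overtwisted. The step requiring the most care is the identification of the induced monodromy with $\phi^5$ (rather than $\phi$); the cleanest verification is to observe that $\widetilde M\setminus\widetilde B$ is the connected $5$-fold cyclic cover of the mapping torus $M\setminus B$ associated to $5\Z\subset\Z=\pi_1(S^1)$, and is therefore itself the mapping torus of $\phi^5$.
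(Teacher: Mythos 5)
Your construction matches the paper's: take the open book with connected binding, pseudo-Anosov monodromy $h$, and fractional Dehn twist coefficient $k/n$ ($k\geq 1$) furnished by~\cite{CH1}, pass to the degree~$5$ cyclic branched cover along the binding, identify the induced open book as having monodromy $h^5$ (so it is still pseudo-Anosov, the binding is still connected, and the FDTC is $5k/n$ with numerator $\geq 5$), and invoke Theorem~\ref{thm: main}. The additional details you supply --- connectedness of the branch locus upstairs because the meridian of $K$ generates $\Z/5$, the mapping-torus description of the cover giving $h^5$, and homogeneity of the FDTC under iteration --- are all correct and are exactly what the paper leaves implicit.

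The one genuine flaw is your justification of the parenthetical ``tight.'' You assert that \emph{every} overtwisted contact $3$-manifold admits a Reeb vector field with only finitely many periodic orbits and conclude tightness by contraposition with the Katok/Lima--Sarig horseshoe theorem. That assertion is not an established result. Indeed, if it were, the paper's closing Question --- ``Does there exist an overtwisted contact $3$-manifold on which every Reeb flow has positive topological entropy?'' --- would immediately have a negative answer, whereas the authors explicitly pose it as open. So this route to tightness does not work. The paper itself does not argue for tightness in its proof of the corollary; the intended (and unstated) justification is a tightness criterion purely in terms of the open book data --- pseudo-Anosov monodromy together with a right-veering / sufficiently large fractional Dehn twist condition (e.g.\ via Honda--Kazez--Mati\'c or the hypertight contact forms constructed in~\cite{CH2}) --- rather than anything about counting Reeb orbits on overtwisted manifolds. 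You should either cite such a criterion directly or simply omit the tightness claim, since your dynamical argument for it is not valid.
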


\begin{proof}
Using \cite{CH1} we take a supporting open book for $(M,\xi)$ with pseudo-Anosov monodromy $h$, connected binding $K$, and fractional Dehn twist coefficient $\frac{k}{n}$ with $k\geq 1$. The degree five branched cover of $(M,\xi)$ along $K$ is supported by an open book decomposition whose monodromy is $h^5$.  Its fractional Dehn twist coefficient is $\frac{5k}{n}$ with $5k\geq 5$ and we can apply Theorem~\ref{thm: main}.
\end{proof}

We finish the introduction with a question which we believe could be interesting for future investigations:

\begin{q}
Does there exist an overtwisted contact $3$-manifold on which every Reeb flow has positive topological entropy?
\end{q}

\textbf{Acknowledgements:} Our special thanks to Fr\'ed\'eric Bourgeois for explaining to us the construction of the linearized Legendrian contact homology of a directed pair of Legendrian submanifolds, which is part of his joint work with Ekholm and Eliashberg \cite{BEE}. 

\section{Preliminaries}

\subsection{Open book decompositions}

\begin{defn}
An {\em open book decomposition of} a closed $3$-manifold $M$ is a triple $(S,h,\phi)$, where
\begin{itemize}
\item $S$ is a compact oriented surface with nonempty boundary;
\item the {\em monodromy} $h:S\stackrel\sim\to S$ is a diffeomorphism which restricts to the identity on $\partial S$; and
\item $\phi : M(S,h) \stackrel\sim\to M$ is a homeomorphism.
\end{itemize}
Here $M(S,h)$ denotes the {\em relative mapping torus} $S\times [0,1]/\sim_h$, where $\sim_h$ is the equivalence relation given by $(h(x),0)\sim_h (x,1)$ for all $x\in S$ and $(y,t)\sim_h (y,t')$ for all $y\in \partial S$ and $t,t' \in [0,1]$.
\end{defn}

The surfaces $S\times \{ t\}/\sim_h$ are the {\em pages} of the open book and the link $\partial S\times [0,1]/\sim_h$ is the {\em binding}. (We will also refer to their images under $\phi$ as the ``pages'' and the ``binding''.) Note that the manifold $M(S,h)$ is oriented since $S$ and $[0,1]$ are and $M$ has the induced orientation via $\phi$.  The binding is oriented as the boundary of $S$.

The relationship between contact structures and open book decompositions was clarified by Giroux \cite{Gi}, who notably gave the following definition:

\begin{defn}
An open book decomposition $(S,h)$ for $M$ {\em supports} a contact structure $\xi$ if
\begin{itemize}
\item $\xi$ is positive with respect to the orientation of $M(S,h)$; and
\item there exists a contact form $\alpha$ for $\xi$ whose Reeb vector field is positively transverse to the interior of the pages and positively tangent to the binding.
\end{itemize}
\end{defn}

A fundamental result of Giroux~\cite{Gi} is that every contact structure on a closed $3$-manifold is supported by some open book decomposition.

\subsection{Fractional Dehn twist coefficients}

Let $S$ be a compact oriented surface with nonempty connected boundary and $h: S\stackrel\sim\to S$ be a diffeomorphism of $S$ which is the identity on the boundary $\partial S$. We take an orientation-preserving identification $\partial S \simeq\R/\Z$. Suppose that $h$ is freely homotopic to a pseudo-Anosov homeomorphism $\psi$ by a homotopy $(h_t )_{t\in [0,1]}$, $h_0 =h$, $h_1 =\psi$.

We define the {\em fractional Dehn twist coefficient $c$ of h} to be the rotation number of the isotopy $(\beta_t =h_t \vert_{\partial S} )_{t\in [0,1]}$ as follows: Lift $(\beta_t  )_{t\in [0,1]}$ from an isotopy of $\R /\Z$ to an isotopy $(\widetilde{\beta_t} )_{t\in [0,1]}$ of $\R$ and define $f(x) =\widetilde{\beta_1} (x) -\widetilde{\beta_0} (x)+x$. Then
$$c=\lim_{n\rightarrow \infty} {f^n(x)-x\over n}$$
for any $x\in \bdry S$.

The pseudo-Anosov map $\psi$ has a stable invariant singular foliation $\F$. Since $\partial S\simeq \R/\Z$, the foliation $\F$ has a certain number $n$ of singularities $x_1,\dots,x_n$ along $\partial S$. These singularities are preserved by both $h$ (which is the identity on $\partial S$) and $\psi$. This implies that the fractional Dehn twist coefficient of $h$ has the form $c=\frac{k}{n}$.

\subsection{The Rademacher function for a pseudo-Anosov map} \label{subsection: Rademacher function}

Let $\psi : S\stackrel\sim\to S$ be a pseudo-Anosov homeomorphism of a compact oriented surface with nonempty connected boundary. We define a quasi-morphism $\Phi: \pi_1(S) \to \Z$  that is invariant under $\psi$, called the {\em Rademacher function}. Our actual map will rather be defined on the space of homotopy classes of oriented arcs in $S$ relative to their endpoints. It has the following properties:
\be
\item $\Phi (\delta_0 \delta_1) =\Phi (\delta_0 )+\Phi (\delta_1 )+\epsilon$, $\epsilon =-1,0,1$;
\item $\Phi (e)=0$; and
\item $\Phi (\delta )=-\Phi (-\delta)$.
\ee
Here $\delta_0$ and $\delta_1$ are arcs in $S$ where the terminal point of $\delta_0$ equals the initial point of $\delta_1$ and $\delta_0 \delta_1$ is their concatenation; $e$ is any constant arc; and $-\delta$ is $\delta$ with reversed orientation.

To construct $\Phi$, we consider the stable invariant foliation $\F$ of $\psi$ with saddle singularities $x_1,\dots,x_n$ on $\partial S$. We denote by $P_i$, $i=1,\dots, n$, the separatrix in $int (S)$ that limits to $x_i$; it is called a {\em prong}.  By slight abuse of notation we assume that $P_i$ contains $x_i$. We orient the prongs so that their intersection point with $\partial S$ is positive. Let $\delta$ be an oriented arc in $S$ and $\widetilde{\delta}$ a lift of $\delta$ to the universal cover $\widetilde{S}$ of $S$. For any component $\widetilde{D}$ of $\partial \widetilde{S}$, we consider $P_{\widetilde{D}}$, the union of all the lifts of $P_1,\dots,P_n$ with initial point on $\widetilde{D}$. If the (algebraic) intersection number of $\widetilde{\delta}$ with $P_{\widetilde{D}}$ is greater than or equal to $2$, we set $n_{P_{\widetilde{D}}} (\widetilde{\delta} )$ to be the intersection number of $\widetilde{\delta}$ and $P_{\widetilde{D}}$ minus $1$. If the intersection number of $\widetilde{\delta}$ with $P_{\widetilde{D}}$ is less than or equal to $-2$, we set $n_{P_{\widetilde{D}}} (\widetilde{\delta} )$ to be the intersection number of $\widetilde{\delta}$ and $P_{\widetilde{D}}$ plus $1$. Otherwise, we set $n_{P_{\widetilde{D}}} (\widetilde{\delta} )=0$. Finally we define
\begin{equation}
\Phi (\delta )=\Sigma_{\widetilde{D} \in \partial \widetilde{S}}\; n_{P_{\widetilde{D}}} (\widetilde{\delta} ).
\end{equation}

The properties of $\Phi$ are proven in \cite{CH2}. In particular, the sum has finitely many nonzero terms.

\section{Review of contact homology} \label{section: review of contact homology}

In this section we review the linearized Legendrian contact homology of a directed pair of Legendrian submanifolds $\Lambda \rightarrow\widehat{\Lambda}$ in a contact manifold $(M,\xi)$. This was explained to us by Fr\'ed\'eric Bourgeois and is part of his joint work \cite{BEE} with Ekholm and Eliashberg.\footnote{Any mistakes or imprecise statements in the presentation are our responsibility.}

A contact form $\alpha$ for $(M,\xi)$ is {\em nondegenerate} if for each Reeb orbit of $\alpha$ the linearized return map does not have $1$ as an eigenvalue.  Given a Legendrian submanifold $\Lambda$, $\alpha$ is {\em $\Lambda$-nondegenerate} if for each Reeb chord $c: [0,T]\to M$, $(\phi_T)_*T_{c(0)}\Lambda$ is transverse to $T_{c(T)}\Lambda$, where $\phi_T$ is the time-$T$ flow of the Reeb vector field for $\alpha$.

\subsection{Cobordisms and almost complex structures}

\subsubsection{Symplectizations}

Let $(M,\xi)$ be a contact manifold and $\alpha$ a contact form for $(M,\xi)$. The symplectization of $(M,\xi)$ is the product $\mathbb{R} \times M$ with the symplectic form $d(e^s \alpha)$, where $s$ denotes the $\mathbb{R}$-coordinate on $\mathbb{R} \times M$. The $2$-form $d\alpha$ restricts to a symplectic form on the vector bundle $\xi$ and it is well-known that the set $\mathfrak{j}(\alpha)$ of $d\alpha$-compatible almost complex structures on the symplectic vector bundle $\xi$ is nonempty and contractible. Notice that if $M$ is $3$-dimensional, the set $\mathfrak{j}(\alpha)$ does not depend on the contact form $\alpha$ on $(M,\xi)$.

Given $j \in \mathfrak{j}(\alpha)$, there exists an $\mathbb{R}$-invariant almost complex structure $J$ on $\mathbb{R} \times M$ such that:
\begin{equation} \label{eq16}
J \tfrac{\bdry}{\bdry s} = R_\alpha, \ \ J|_\xi = j,
\end{equation}
where $R_\alpha$ is the Reeb vector field of $\alpha$.
We will denote by $\mathcal{J}(\alpha)$ the set of almost complex structures in $\mathbb{R} \times M$ that are $\mathbb{R}$-invariant
and satisfy \eqref{eq16} for some $j \in \mathfrak{j}(\alpha)$.

\subsubsection{Exact symplectic cobordisms}

Let $(\widehat{W} ,\varpi = d\kappa )$ be an exact symplectic manifold without boundary, and let $(M^+,\xi^+)$ and $(M^-,\xi^-)$ be contact manifolds with contact forms $\alpha^+$ and $\alpha^-$.

\begin{defn}
$(\widehat{W} ,\varpi = d\kappa )$ is an {\em exact symplectic cobordism from $\alpha^+$ to $\alpha^-$} if there exist codimension $0$ submanifolds $W^-$, $W^+$, and $W$ of $\widehat{W}$ and diffeomorphisms $\Psi^+: W^+ \stackrel\sim\to [s_+,+\infty) \times M^+$ and $\Psi^-: W^- \stackrel\sim\to (-\infty,s_-] \times M^-$ for some $s_+$, $s_-$ such that:
\begin{gather}
W \mbox{ is compact, } \widehat{W}= W^+ \cup W \cup W^-, ~~ W^+ \cap W^- = \emptyset,\\
\nonumber (\Psi^+)^* (e^s\alpha^+) = \kappa \mbox{ and } (\Psi^-)^* (e^s\alpha^-) = \kappa.
\end{gather}
\end{defn}

We will write $ \alpha^+ \succ_{ex} \alpha^-$ if there exists an exact symplectic cobordism from $ \alpha^+$ to $ \alpha^-$. We remind the reader that $\alpha^+ \succ_{ex} \alpha$ and $ \alpha \succ_{ex} \alpha^-$ imply $ \alpha^+ \succ_{ex} \alpha^-$; or in other words the exact symplectic cobordism relation is transitive; see \cite{CPT} for a detailed discussion on symplectic cobordisms with cylindrical ends. Notice that a symplectization is a particular case of an exact symplectic cobordism.

\begin{defn}
An almost complex structure $J$ on $(\widehat{W},\varpi)$ is {\em cylindrical} if
\begin{itemize}
\item $J$ coincides with $J^+ \in \mathcal{J}(\alpha^+)$ on $W^+$,
\item $J$ coincides with $J^- \in \mathcal{J}(\alpha^-)$ on $W^-$, and
\item $J$ is compatible with $\varpi$ on $\widehat{W}$.
\end{itemize}
\end{defn}

For fixed $J^+ \in \mathcal{J}(\alpha^+)$ and $J^- \in \mathcal{J}(\alpha^-)$, we denote by $\mathcal{J}(J^-,J^+)$ the set of cylindrical almost complex structures on $( \widehat{W},\varpi)$ that coincide with $J^+$ on $W^+$ and $J^-$ on $W^-$.  It is well known that $\mathcal{J}(J^-,J^+)$ is nonempty and contractible.

\subsubsection{SFT-admissible exact Lagrangian cobordisms}

Let $(\widehat W,\varpi=d\kappa)$ be an exact symplectic cobordism from $\alpha^+$ to $\alpha^-$.

\begin{defn}
A Lagrangian submanifold $L$ of $(\widehat W,d\kappa)$ is a {\em Lagrangian cobordism from $\overline{\Lambda}^+$ to $\overline{\Lambda}^-$} if there exist Legendrian submanifolds $\overline{\Lambda}^+$ of $(M^+,\ker \alpha^+)$ and $\overline{\Lambda}^-$ of $(M^-,\ker \alpha^-)$, and $N>0$ such that, with respect to the identifications $\Psi^+$ and $\Psi^-$:
\begin{gather}
L \cap ([N,+\infty) \times M^+) = ([N,+\infty) \times \overline{\Lambda}^+), \\
L \cap ((-\infty,-N] \times M^-)= ((-\infty,-N] \times \overline{\Lambda}^-).
\end{gather}
\end{defn}

If such an $L$ is an exact Lagrangian submanifold of $(\widehat W,d\kappa)$, we call it an {\em exact Lagrangian cobordism from $\overline{\Lambda}^+$ to $\overline{\Lambda}^-$.}
If $L$ is such that there exists a primitive of $\kappa |_L$  which vanishes outside some compact subset of $L$, then $L$ is called an \textit{SFT-admissible exact Lagrangian cobordism}. All exact Lagrangian cobordisms used in this paper are SFT-admissible.

\subsection{Pseudoholomorphic curves}

Let $\alpha$ be a contact form for $(M,\xi)$, $J \in \mathcal{J}(\alpha)$, and $\widetilde{\Lambda}$ be a Legendrian submanifold of $(M,\xi)$. Let $(S,i)$ be a connected compact Riemann surface, possibly with boundary, $\Omega \subset S$ be a finite ordered set, and $\Omega_{\partial} := \Omega \cap \partial S $.

\begin{defn} \label{defn: holo}
An $(i,J)$-holomorphic map $\widetilde{u}= (r,u):(S \setminus \Omega,i) \to (\mathbb{R} \times M,J)$ with boundary in $\mathbb{R}\times \widetilde{\Lambda}$ is a map that satisfies
$$\overline{\partial}_J(\widetilde{u}):= d\widetilde{u} \circ i - J \circ d\widetilde{u}=0,$$
$$\widetilde{u}(\partial S \setminus \Omega_{\partial}) \subset \mathbb{R}\times \widetilde{\Lambda}.$$
\end{defn}

The operator $\overline{\partial}_J$ above is called the {\em Cauchy-Riemann operator} for the almost complex structure $J$.

An analogous definition exists for exact symplectic cobordisms. Let $(W,d\varpi)$ be an exact symplectic cobordism from a contact form $\alpha^+$ for $(M^+,\xi^+)$ to a contact form $\alpha^-$ for $(M^-,\xi^-)$ in the sense of \cite{CPT}, and $L$ be an SFT-admissible exact Lagrangian cobordism in $(W,d\varpi)$ from a Legendrian submanifold $\widetilde{\Lambda}^+$ of $(M^+,\xi^+)$ to a Legendrian submanifold $\widetilde{\Lambda}^-$ of $(M^-,\xi^-)$.
We consider an almost complex structure $J \in \mathcal{J}(J^+,J^-)$, where $J^+ \in  \mathcal{J}(\alpha^+)$ and $J^- \in  \mathcal{J}(\alpha^-)$ and consider $(i,J)$-holomorphic maps $\widetilde{u}: (S\setminus \Omega,i)\to (\widehat {W},J)$ with boundary on $L$ in a manner analogous to Definition~\ref{defn: holo}.

\subsubsection{Hofer energy}

All pseudoholomorphic curves considered in this paper have finite Hofer energy (cf.\ \cite{H,CPT}). This is essential to guarantee that the curves behave well near the points of $\Omega$ and to ensure the compactness of the relevant moduli spaces.

By the finiteness of Hofer energy, the holomorphic maps $\widetilde{u}: (S\setminus \Omega,i)\to (\widehat{W},J)$ have a very controlled behavior near the points of $\Omega$: they are either removable or are asymptotic to Reeb orbits or Reeb chords, as shown in \cite{Ab,H,HWZ}. For simplicity we assume there are no removable points.
The elements of the set $\Omega \subset S$ are called {\em punctures} of $\widetilde{u}$; the elements of $\Omega_\partial$ are {\em boundary punctures} and the elements in $\Omega \setminus \Omega_\partial$ are {\em interior punctures.} The works \cite{Ab,H,HWZ} classify the punctures in four different types (here we are writing $\widetilde{u}=(r,u)$ at the ends):
\begin{itemize}
\item $z \in \Omega$ is a positive (resp.\ negative) boundary puncture if $z \in \Omega_\partial$ and $\lim_{z' \to z} s(z') = +\infty$ (resp.\ $-\infty$); in this case $u$ is positively (resp.\ negatively) asymptotic to a Reeb chord near $z$;
\item $z \in \Omega$ is a positive (resp.\ negative) interior puncture if $z \in \Omega \setminus \Omega_\partial$ and $\lim_{z' \to z} s(z') = +\infty$ (resp.\ $-\infty$); in this case $u$ is positively (resp.\ negatively) asymptotic to a Reeb orbit near $z$.
\end{itemize}
Intuitively, on a neighborhood of a puncture, $\widetilde{u}$ detects a Reeb chord or a Reeb orbit. If $u$ is asymptotic to a $\Lambda$-nondegenerate Reeb chord or a nondegenerate Reeb orbit at a puncture, more can be said about its asymptotic behavior in the neighborhood of this puncture. In this case $u$ limits to Reeb chords and Reeb orbits exponentially fast at the punctures: this is crucial to define a Fredholm theory for $J$-holomorphic curves.

We will now define the three different types of pseudoholomorphic curves that are used in this paper.

\subsubsection{Curves of \textbf{Type A}} \label{subsubsection: A}

Let $\alpha$ be a contact form for $(M,\xi)$ and $J\in \mathcal{J}(\alpha)$. Let $\gamma^+$ be a nondegenerate Reeb orbit of $\alpha$ and let $\bs\gamma^-=(\gamma^-_1,\gamma^-_2,\dots,\gamma^-_n)$ be an ordered sequence of nondegenerate Reeb orbits of $\alpha$.

\begin{defn}
A finite energy $J$-holomorphic curve of {\em \textbf{Type A} in $(\mathbb{R}\times M,J)$ asymptotic to $(\gamma^+,\bs\gamma^-)$}  is a tuple $(\widetilde{u},z^+,{\bf z}^-)$, where $z^+$ is a point in $S^2$, ${\bf z}^-=(z^-_1,\dots,z^-_n)$ is an ordered collection of distinct points $\not=z^+$ on $S^2$, and $\widetilde{u}: (S^2\setminus z^+ \cup {\bf z}^-,i_0) \to  (\mathbb{R}\times M,J)$ is a finite energy $J$-holomorphic curve satisfying:
\begin{itemize}
  \item $\widetilde{u}$ is positively asymptotic to $\gamma^+$ near $z^+$; and
  \item $\widetilde{u}$ is negatively asymptotic to $\gamma^-_l$ near $z^-_l$, $l=1,\dots, n$.
\end{itemize}
Here $i_0$ is the standard complex structure on $S^2$.
\end{defn}

Let $\widetilde{\mathcal{M}}(\gamma^+,\bs\gamma^-;J)$ be the moduli space of equivalence classes of finite energy $J$-holomorphic curves of \textbf{Type A} in $(\mathbb{R}\times M,J)$ asymptotic to $(\gamma^+,\bs\gamma^-)$. Two $J$-holomorphic curves $(\widetilde{u},z^+,{\bf z}^-)$ and $(\widetilde{v},y^+,{\bf y}^-)$ of \textbf{Type A} in $(\mathbb{R}\times M,J)$ asymptotic to $(\gamma^+,\bs\gamma^-)$ are equivalent if there exists a biholomorphism $\tau:(S^2,i_0)\stackrel\sim \to (S^2,i_0)$ which satisfies:
\begin{itemize}
   \item $\tau(z^+)=y^+ $ and $\tau(z^-_l)= y^-_l $ for all $l$; and
   \item $\widetilde{v}  \circ \tau = \widetilde{u}$.
\end{itemize}
The translation in the $s$-direction induces an $\mathbb{R}$-action on $\widetilde{\mathcal{M}}(\gamma^+,\bs\gamma^-;J)$ and we write
$${\mathcal{M}}(\gamma^+,\bs\gamma^-;J)=\widetilde{\mathcal{M}}(\gamma^+,\bs\gamma^-;J)/\R.$$

\s\n {\em Modifiers.}
We also introduce the modifier $\ast$ as in $\mathcal{M}^\ast(\star)$ to restrict the moduli space $\mathcal{M}(\star)$.  For example, when $\ast$ is $\op{ind}=k$, we restrict to curves of Fredholm index $k$, and when $\ast$ is $A$, we restrict to curves representing a relative homology class $A$.

\s
A similar definition holds when $(\widehat W,d\kappa)$ is an exact symplectic cobordism from a contact form $\alpha^+$ for $(M^+,\xi^+)$ to a contact form $\alpha^-$ for $(M^-,\xi^-)$ endowed with an almost complex structure $\widehat{J}\in \mathcal{J}(J^+,J^-)$, where $J^+ \in \mathcal{J}(\alpha^+)$ and $J^- \in \mathcal{J}(\alpha^-)$. If $\gamma^+$ is a Reeb orbit of $\alpha^+$ and $\bs\gamma^-=(\gamma^-_1,\dots,\gamma^-_n)$ is an ordered collection of Reeb orbits of $\alpha^-$, we define {\em finite energy holomorphic curves of \textbf{Type A} in $(\widehat{W},\widehat{J})$ asymptotic to $(\gamma^+,\bs\gamma^-)$,} along the same lines as above. We then define the moduli space ${\mathcal{M}}(\gamma^+,\bs\gamma^-;\widehat{J})$ of equivalence classes of finite energy holomorphic curves of \textbf{Type A} in  $(\widehat W,\widehat{J})$ asymptotic to $(\gamma^+,\bs\gamma^-)$.

\subsubsection{Curves of \textbf{Type B}} \label{subsubsection: B}

Let $\alpha$ be a contact form on $(M,\xi)$, $J\in \mathcal{J}(\alpha)$, and $\Lambda$ be a Legendrian submanifold of $(M,\xi)$. Let $\sigma^+$ be an $\alpha$-Reeb chord from $\Lambda$ to itself, $\bs\sigma^-=(\sigma^-_1,\dots,\sigma^-_n)$ be an ordered sequence of $\alpha$-Reeb chords from $\Lambda$ to itself, and $\bs\gamma^-=(\gamma^-_1,\dots,\gamma^-_m)$ be an ordered collection of Reeb orbits of $\alpha$.  Let $\mathbb{D}$ be the closed unit disk in $\C$.

\begin{defn}
A finite energy holomorphic curve of {\em \textbf{Type B} in $(\mathbb{R}\times M,J)$ with boundary in $\mathbb{R}\times \Lambda$ asymptotic to $(\sigma^+,\bs\sigma^-,\bs\gamma^-)$} is a triple $(\widetilde{u},{\bf x}^-,{\bf z}^-)$, where ${\bf x}^- = (x^-_1,\dots,x^-_n)$ is an ordered collection of distinct points $\not=1$ in $\partial \mathbb{D}$, ${\bf z}^-=(z^-_1,\dots,z^-_m)$ is an ordered collection of points in the interior of $\mathbb{D}$, and $\widetilde{u}: (\mathbb{D} \setminus (\{1\}\cup {\bf x}^- \cup {\bf z}^-), i_0) \to (\mathbb{R}\times M,J)$ is a finite energy $J$-holomorphic curve satisfying:
\begin{itemize}
  \item if we make one full turn in $S^1$ in the counterclockwise sense starting at $1$ we hit the points of ${\bf x}^-$ in the order $x^-_1,\dots,x^-_n$,
  \item $\widetilde{u}$ is positively asymptotic to the Reeb chord $\sigma^+$ near the puncture $1$,
  \item $\widetilde{u}$ is negatively asymptotic to the Reeb chord $\sigma^-_l$ near $x^-_l$,
  \item $\widetilde{u}$ is negatively asymptotic to the Reeb orbit $\gamma^-_l$ near $z^-_l$,
  \item $\widetilde{u}(\partial \mathbb{D}  \setminus (\{1\}\cup x^-)) \subset \mathbb{R}\times \Lambda$.
\end{itemize}
\end{defn}

Let $\widetilde{\mathcal{M}}(\sigma^+,\bs\sigma^-,\bs\gamma^-;J)$ be the moduli space of equivalence classes of finite energy holomorphic curves of \textbf{Type B} in $(\mathbb{R}\times M,J)$ asymptotic to $(\sigma^+,\bs\sigma^-,\bs\gamma^-)$, where the equivalence relation is analogous to the one for \textbf{Type A}.  Again there is an $\mathbb{R}$-action on $\widetilde{\mathcal{M}}(\sigma^+,\bs\sigma^-,\bs\gamma^-;J)$ and we set
$${\mathcal{M}}(\sigma^+,\bs\sigma^-,\bs\gamma^-;J)=\widetilde{\mathcal{M}}(\sigma^+,\bs\sigma^-,\bs\gamma^-;J)/\R.$$

Next suppose that $(\widehat W,d\kappa)$ is an exact symplectic cobordism from a contact form $\alpha^+$ for $(M^+,\xi^+)$ to a contact form $\alpha^-$ for $(M^-,\xi^-)$, and $L \subset (\widehat W,d\kappa)$ is an SFT-admissible exact Lagrangian cobordism from a Legendrian $\Lambda^+$ in $(M^+,\xi^+)$ to a Legendrian $\Lambda^-$ in $(M^-,\xi^-)$. Let $\widehat{J}\in \mathcal{J}(J^+,J^-)$, where $J^+ \in \mathcal{J}(\alpha^+)$ and $J^- \in \mathcal{J}(\alpha^-)$. If $\sigma^+$ is  a $\alpha^+$-Reeb chord from $\Lambda^+$ to itself, $\bs\sigma^-=(\sigma^-_1,\dots,\sigma^-_n)$ is an ordered collection of $\alpha^-$-Reeb chords from $\Lambda^-$ to itself, and $\bs\gamma^-=(\gamma^-_1,\dots,\gamma^-_n)$ is an ordered collection of Reeb orbits of $\alpha^-$, then we can similarly define finite energy holomorphic curves of \textbf{Type B} in $(\widehat W, \widehat{J})$ with boundary in $L$ asymptotic to $(\sigma^+,\bs\sigma^-,\bs\gamma^-)$ and the moduli space ${\mathcal{M}}(\sigma^+,\bs\sigma^-,\bs\gamma^-;\widehat{J})$ of equivalence classes of finite energy holomorphic curves \textbf{Type B} in  $(\widehat W, \widehat{J})$ with boundary in $L$ asymptotic to $(\sigma^+,\bs\sigma^-,\bs\gamma^-)$.

\subsubsection{Curves of \textbf{Type C}} \label{subsubsection: C}

Let $\alpha$ be a contact form on $(M,\xi)$, $J\in \mathcal{J}(\alpha)$ and $(\Lambda,\widehat{\Lambda})$ be a pair of disjoint Legendrian submanifolds in $(M,\xi)$. Let $\tau^+$ and $\tau^-$ be $\alpha$-Reeb chords from $\Lambda$ to $\widehat{\Lambda}$, $\bs\sigma^-=(\sigma^-_1,\dots,\sigma^-_n)$ be an ordered sequence of $\alpha$-Reeb chords from $\Lambda$ to itself, $\widehat{\bs\sigma}^-=(\widehat{\sigma}^-_1,\dots,\widehat{\sigma}^-_{\widehat{n}})$ be an ordered sequence of $\alpha$-Reeb chords from $\widehat{\Lambda}$ to itself, and $\bs\gamma^-=(\gamma^-_1,\dots,\gamma^-_m)$ be an ordered collection of Reeb orbits of $\alpha$.

\begin{defn}
A holomorphic curve of {\em \textbf{Type C} in $(\mathbb{R}\times M,J)$ with boundary in $(\mathbb{R}\times \Lambda,\mathbb{R}\times \widehat{\Lambda})$ asymptotic to $(\tau^+,\tau^-,\bs\sigma^-,\widehat{\bs\sigma}^-,\bs\gamma^-)$} is a quadruple $(\widetilde{u},{\bf x}^-,\widehat{\bf x}^-,{\bf z}^-)$, where ${\bf z}^-=(z^-_1,\dots,z^-_m)$ is an ordered collection of distinct points in the interior of $\mathbb{D}$, ${\bf x}^-=(x^-_1,\dots,x^-_n)$ is an ordered collection of distinct points in $\partial\mathbb{D}$ which does not contain $1$ or $-1$ ,  $\widehat{\bf x}^-=(\widehat{x}^-_1,\dots,\widehat{x}^-_{\widehat{n}})$ is an ordered collection of distinct points in $\partial\mathbb{D}$ which is disjoint from ${\bf x}^-$ and does not contain $1$ or $-1$, and $\widetilde{u}: \mathbb{D} \setminus (\{-1,1\}\cup {\bf x}^- \cup \widehat{\bf x}^- \cup {\bf z}^-) \to (\mathbb{R} \times M,J)$ is a holomorphic curve satisfying:
\begin{itemize}
  \item all the points of ${\bf x}^-$ are in the upper hemisphere $S^+$ of the circle $\partial \mathbb{D}$, and if we move along $S^+$ in the counterclockwise direction starting at $1$ and ending at $-1$ we hit the points of ${\bf x}^-$ precisely in the order given by ${\bf x}^-$,
  \item all the points of $\widehat{\bf x}^-$ are in the lower hemisphere $S^-$ of the circle $\partial \mathbb{D}$, and if we move along $S^-$ in the counterclockwise direction starting at $-1$ and ending at $1$ we hit the points of $\widehat{\bf x}^-$ precisely in the order given by $\widehat{\bf x}^-$,
  \item $\widetilde{u}$ is positively asymptotic to $\tau^+$ near the puncture $1$,
  \item $\widetilde{u}$ is negatively asymptotic to $\tau^-$ near the puncture $-1$,
  \item $\widetilde{u}$ is negatively asymptotic to $\sigma^-_l$ near the puncture $x^-_l$,
  \item $\widetilde{u}$ is negatively asymptotic to $\widehat{\sigma}^-_l$ near the puncture $\widehat{x}^-_l$,
  \item $\widetilde{u}$ is negatively asymptotic to $\gamma^-_l$ near the puncture $z^-_l$,
  \item $\widetilde{u}(S^+ \setminus \{-1,1\}\cup {\bf x}^-)$ is contained in $\mathbb{R}\times \Lambda$,
    \item $\widetilde{u}(S^- \setminus \{-1,1\}\cup \widehat{\bf x}^-)$ is contained in $\mathbb{R}\times \widehat{\Lambda}$.
\end{itemize}
\end{defn}

Using the same recipe as above we define the moduli space $\widetilde{\mathcal{M}}(\tau^+,\tau^-,\bs\sigma^-,\widehat{\bs\sigma}^-,\bs\gamma^-;J)$ of equivalence classes of finite energy holomorphic curves of \textbf{Type C} in $(\mathbb{R}\times M,J)$ asymptotic to $(\tau^+,\tau^-,\bs\sigma^-,\widehat{\bs\sigma}^-,\bs\gamma^-)$ and $${\mathcal{M}}(\tau^+,\tau^-,\bs\sigma^-,\widehat{\bs\sigma}^-,\bs\gamma^-;J)=\widetilde{\mathcal{M}}(\tau^+,\tau^-,\bs\sigma^-,\widehat{\bs\sigma}^-,\bs\gamma^-;J)/\R.$$

Next suppose that $(\widehat W, d\kappa)$ is an exact symplectic cobordism from a contact form $\alpha^+$ for $(M^+,\xi^+)$ to a contact form $\alpha^-$ for $(M^-,\xi^-)$ and  $L \subset (\widehat W,d\kappa)$ (resp.\ $\widehat L$) is an SFT-admissible exact Lagrangian cobordism from a Legendrian $\Lambda^+$ (resp.\ $\widehat{\Lambda}^+$) in $(M^+,\xi^+)$ to a Legendrian $\Lambda^-$ (resp.\ $\widehat{\Lambda}^-$) in $(M^-,\xi^-)$. Let $\widehat{J}\in \mathcal{J}(J^+,J^-)$, where $J^+ \in \mathcal{J}(\alpha^+)$ and $J^- \in \mathcal{J}(\alpha^-)$. If $\tau^+$ is  an $\alpha^+$-Reeb chord from $\Lambda^+$ to $\widehat{\Lambda}^+$, $\tau^-$ is an $\alpha^-$-Reeb chord from $\Lambda^-$ to $\widehat{\Lambda}^-$, $\bs\sigma^-=(\sigma^-_1,\dots,\sigma^-_n)$ is an ordered sequence of $\alpha^-$-Reeb chords from $\Lambda^-$ to itself, $\widehat{\bs\sigma}^-=(\widehat{\sigma}^-_1,\dots,\widehat{\sigma}^-_{\widehat{n}})$ is an ordered sequence of $\alpha^-$-Reeb chords from $\widehat{\Lambda}^-$ to itself, and $\bs\gamma^-=(\gamma^-_1,\dots,\gamma^-_m)$ is an ordered collection of Reeb orbits of $\alpha^-$, we can define finite energy holomorphic curves of \textbf{Type C} in $(\widehat W,d\kappa, \widehat{J})$ with boundary in $L\cup \widehat{L}$ asymptotic to $(\tau^+,\tau^-,\bs\sigma^-,\widehat{\bs\sigma}^-,\bs\gamma^-)$.
We then define the moduli space ${\mathcal{M}}(\tau^+,\tau^-,\bs\sigma^-,\widehat{\bs\sigma}^-,\bs\gamma^-;\widehat{J})$  of equivalence classes of finite energy holomorphic curves of \textbf{Type C} in $(\widehat W,\widehat{J})$ with boundary in $L\cup \widehat{L}$ asymptotic to $(\tau^+,\tau^-,\bs\sigma^-,\widehat{\bs\sigma}^-,\bs\gamma^-)$.

\subsubsection{Compactification of moduli spaces}

By the results of \cite{CPT}, a moduli space $\mathcal{M}$ of {\bf Type A, B} or {\bf C} admit a natural compactification $\overline{\mathcal{M}}$ called {\em the SFT compactification.} The compactification $\overline{\mathcal{M}}$ consists not only of pseudoholomorphic curves but also of multiple-level pseudoholomorphic buildings in the sense of \cite{CPT}. Pseudoholomorphic buildings are collections of pseudoholomorphic curves which satisfy certain matching conditions. We refer the reader to \cite{CPT} for the precise definitions.

\s
{\em From now on we restrict attention to the case where the contact manifolds are $3$-dimensional and the symplectic cobordisms are $4$-dimensional.}

\subsection{Full contact homology} \label{sec3.2.1}

The full contact homology of a contact manifold, introduced in \cite{SFT}, is an important invariant of contact structures. We refer the reader to \cite{SFT} and \cite{B} for detailed presentations of the material contained in this subsection.

Let $(M,\xi)$ be a contact $3$-manifold, $\alpha$ a nondegenerate contact form for $\xi$, and $R_\alpha$ the Reeb vector field for $\alpha$. We denote by $\mathcal{P}(\alpha)$ the set of good contractible periodic orbits of $R_\alpha$. To each orbit $\gamma \in \mathcal{P}(\alpha)$, we assign a $\mathbb{Z}/2$-grading $|\gamma| = \mu_{CZ}(\gamma) -1 \mbox{ mod } 2$. An orbit $\gamma$ is called {\em good} if it is either simple, or if $\gamma = (\gamma')^i$ for a simple orbit $\gamma'$ with $ |\gamma|=|\gamma'|$.

We will be assuming the following:

\begin{hypH}
There exists a perturbation scheme ${\frak P}$ which consists of $J\in \mathcal{J}(\alpha)$ and an assignment of a transversely cut out $\Q$-weighted branched manifold $\mathcal{Z}^A(\gamma,\bs\gamma')$ to each compactified moduli space $\overline{\mathcal{M}^A}(\gamma,\bs\gamma';J)$ which satisfies the following:
\be
\item if $\overline{\mathcal{M}^A}(\gamma,\bs\gamma';J)=\mathcal{M}^A(\gamma,\bs\gamma';J)$, is transversely cut out, and has Fredholm index $\op{ind}=1$, then $\# \overline{\mathcal{M}^A}(\gamma,\bs\gamma';J)=\#\mathcal{Z}^A(\gamma,\bs\gamma')$, where $\#$ refers to the weighted algebraic count;
\item if $\mathcal{Z}^A(\gamma,\bs\gamma')$ has $\op{ind}=2$, then
$$\bdry\mathcal{Z}^A(\gamma,\bs\gamma')=\coprod\mathcal{Z}^{A_1}(\gamma,\bs\gamma_1) \times \mathcal{Z}^{A_2}(\gamma_2,\bs\gamma_3),$$
where the disjoint union is over all pairs of $\op{ind}=1$ moduli spaces that formally glue to yield a curve from $\gamma$ to $\bs\gamma'$.
\ee
\end{hypH}

Hypothesis H has now been proven by Bao-Honda \cite{BH}; Pardon \cite{Pa1,Pa2} gives an algebraic substitute. Establishing Hypothesis H could also be done using the polyfold technology of Hofer-Wysocki-Zehnder~\cite{HWZ2} or the Kuranishi structures of Fukaya-Oh-Ohta-Ono~\cite{FO3}.

We now describe the contact homology differential graded algebra (dga) $\mathfrak{A}(M,\alpha, {\frak P})$. As an algebra $\mathfrak{A}(M,\alpha, {\frak P})$ is the graded commutative $\mathbb{Q}$-algebra with unit generated by $\mathcal{P}(\alpha)$. The commutativity means that the relation $a b = (-1)^{|a||b|}ba$ is valid for all $a,b \in \mathfrak{A}(M,\alpha,{\frak P})$. The $\mathbb{Z}/2$-grading on the elements of the algebra is obtained by considering on the generators the grading mentioned above and extending it to $\mathfrak{A}(M,\alpha, {\frak P})$.

We define the differential $\bdry$ on $\gamma\in \mathcal{P}(\alpha)$ as follows:
\begin{equation}
\bdry \gamma = m(\gamma) \sum_{\bs{\gamma}'=(\gamma'_1,\dots,\gamma'_m)} A(\gamma,\bs\gamma') \gamma'_1 \gamma'_2\dots \gamma'_m,
\end{equation}
where the sum is taken over all finite ordered collections $\bs\gamma'$ of elements of $\mathcal{P}(\alpha)$, $A(\gamma,\bs\gamma')\in \Q$ is the algebraic count of points in $\mathcal{Z}^{\op{ind}=1}(\gamma,\bs\gamma')$ and $m(\gamma)$ is the multiplicity of $\gamma$.  The map $\bdry$ is extended to the whole algebra by the graded Leibniz rule.

Part (2) of Hypothesis H implies that $\bdry^2=0$. Therefore, $\mathfrak{A}(M,\alpha,{\frak P})$ is a $\mathbb{Z}/2$-graded commutative dga.

\begin{defn}
The {\em full contact homology} $HC(M,\alpha,{\frak P})$ is the homology of the dga $\mathfrak{A}(M,\alpha,{\frak P})$.
\end{defn}

The full contact homology is independent of the choice of contact form $\alpha$ for $(M,\xi)$, the choice of the cylindrical almost complex structure $J\in \mathcal{J}(\alpha)$, and the choice of abstract perturbation scheme ${\frak P}$.

\subsection{The Legendrian contact homology of a Legendrian knot}

Let $\Lambda$ be a Legendrian knot in $(M,\xi)$ and let $\alpha$ be a nondegenerate, $\Lambda$-nondegenerate contact form for $(M,\xi)$.  Let $\mathcal{T}_\alpha(\Lambda)$ be the set of $\alpha$-Reeb chords starting and ending at $\Lambda$, and that are trivial in $\pi_1(M,\Lambda)$.

Using a perturbation scheme ${\frak P}$ which extends that of full contact homology and which will usually be suppressed from the notation, one similarly associates to $(\alpha,\Lambda)$ a differential graded algebra $\mathfrak{C}(M,\alpha,\Lambda)$. This construction is due to Eliashberg, Givental and Hofer and was outlined in \cite[Section 2.8]{SFT}. The algebra $\mathfrak{C}(M,\alpha,\Lambda)$ is the $\mathbb{Z}/2$-graded associative unital algebra generated by elements of $\mathcal{T}_\alpha(\Lambda)$ and  $\mathcal{P}(\alpha)$ with the relations
\begin{eqnarray}
\gamma_1 \gamma_2 = (-1)^{|\gamma_2||\gamma_1| } \gamma_2 \gamma_1, \\
\gamma_1 \sigma = (-1)^{|\sigma||\gamma_1|} \sigma \gamma_1,
\end{eqnarray}
for all $\gamma_1,\gamma_2 \in \mathcal{P}(\alpha)$ and $\sigma \in \mathcal{T}_\alpha(\Lambda)$. Equivalently, we can think of $\mathfrak{C}(M,\alpha,\Lambda)$ as the $\mathbb{Z}/2$-graded associative unital algebra generated by elements of $\mathcal{T}_\alpha(\Lambda)$ with coefficients in $\mathfrak{A}(M,\alpha)$.
It is then clear that $\mathfrak{C}(M,\alpha,\Lambda)$ is a right and left module over the differential graded algebra $\mathfrak{A}(M,\alpha)$. The degree $|\sigma|$ of an element $\sigma\in \mathcal{T}_\alpha(\Lambda)$ is given by the parity of its Conley-Zehnder index minus one, and is again extended algebraically to the whole algebra.

We define a differential $\partial_{\Lambda}$ on the generators $\sigma$ of $\mathcal{T}_\alpha(\Lambda)$ as follows:
\begin{equation}\label{diffonelegendrian}
\partial_{\Lambda} \sigma = \sum_{\bs\sigma'=(\sigma'_1,\dots,\sigma'_{k}),\bs\gamma=(\gamma_1,\dots,\gamma_m)} \sigma'_1\dots\sigma'_{k} \widetilde{C}(\sigma,\bs\sigma,\bs\gamma)\gamma_1\dots\gamma_m,
\end{equation}
where the sum is taken over all finite ordered collections $\bs\sigma'$ of elements of $\mathcal{T}_\alpha(\Lambda)$ and all finite ordered collections $\bs\gamma$ of elements of $\mathcal{P}(\alpha)$, $\widetilde{C}(\sigma,\bs\sigma',\bs\gamma) \in \mathbb{Q}$ is the algebraic count of the perturbation $\mathcal{Z}^{\op{ind}=1}(\sigma,\bs\sigma',\bs\gamma)$ of $\overline{\mathcal{M}}^{\op{ind}=1}(\sigma,\bs\sigma',\bs\gamma;J)$ and $m(\gamma_j)$ is the multiplicity of the Reeb orbit $\gamma_j$.  The differential is extended using the graded Leibniz rule. In particular, if $\gamma \in \mathfrak{A}(M,\alpha)$ and $\sigma \in \mathcal{T}_\alpha(\Lambda)$ then
\begin{equation}\label{eqcoefficient1}
\partial_{\Lambda} (\sigma\gamma) = \partial_{\Lambda}(\sigma)\gamma  +  (-1)^{|\sigma|}\sigma\partial(\gamma).
\end{equation}

The Legendrian contact homology analog of Hypothesis H and the outline given in \cite[Section 2.8]{SFT} imply that $\partial_{\Lambda}^2=0$.

\subsection{The opposite of $(\mathfrak{C}(M,\alpha,\Lambda),\partial_{\Lambda})$}

We recall the notion of the opposite of  $(\mathfrak{C}(M,\alpha,\Lambda),\partial_{\Lambda})$; cf. \cite{Halp,St} for the definition for general differential graded algebras.
First we define for all elements  $a,b \in \mathfrak{C}(M,\alpha,\Lambda)$ the product
\begin{equation}
a \bigcdot_{op} b = (-1)^{|a||b|}ba.
\end{equation}
The algebra obtained by considering the addition and multiplication by scalars as in $\mathfrak{C}(M,\alpha,\Lambda)$ and the product $\bigcdot_{op}$ is called the {\em opposite of $\mathfrak{C}(M,\alpha,\Lambda)$} and is denoted by $\mathfrak{C}_{op}(M,\alpha,\Lambda)$. We also consider the grading for $\mathfrak{C}_{op}(M,\alpha,\Lambda)$ to be the same as that for $\mathfrak{C}(M,\alpha,\Lambda)$.

On $\mathfrak{C}_{op}(M,\alpha,\Lambda)$ we consider the differential $\partial_\Lambda$ defined on $\mathfrak{C}(M,\alpha,\Lambda)$. A direct computation shows that $\partial_\Lambda$ is a differential on $\mathfrak{C}_{op}(M,\alpha,\Lambda)$ since it satisfies the graded Leibniz rule
\begin{equation}
\partial_\Lambda(a \bigcdot_{op} b)= (\partial_\Lambda a) \bigcdot_{op} b + (-1)^{|a|}a \bigcdot_{op} \partial_\Lambda b
\end{equation}
for all $a, b \in \mathfrak{C}_{op}(M,\alpha,\Lambda)$. The differential graded algebra $(\mathfrak{C}_{op}(M,\alpha,\Lambda),\partial_\Lambda)$ is called the {\em opposite of $(\mathfrak{C}(M,\alpha,\Lambda),\partial_{\Lambda})$.}

\subsection{The Legendrian contact homology of a pair $\Lambda \rightarrow\widehat{\Lambda}$}

The homology we present in this subsection is the one described in \cite[Remark 7.4]{BEE}; a similar construction appears in \cite{Ek}.
Let $\Lambda$ and $\widehat{\Lambda}$ two disjoint Legendrian knots on $(M,\xi)$. Suppose that $\alpha$ is nondegenerate and $\Lambda\cup\widehat{\Lambda}$-nondegenerate. Let $\mathcal{T}_\alpha(\Lambda \rightarrow\widehat{\Lambda})$ be the set of $\alpha$-Reeb chords with initial point on $\Lambda$ and terminal point on $\widehat{\Lambda}$. Again we use a perturbation scheme ${\frak P}$ which will usually be suppressed from the notation.

We first consider the dga $\mathfrak{C}_{op}(M,\alpha,\Lambda) \otimes_{\mathfrak{A}(M,\alpha)} \mathfrak{C}(M,\alpha,\widehat{\Lambda})$ whose differential is the Koszul differential of the tensor product. Let $\mathfrak{B}(M,\alpha,\Lambda \rightarrow\widehat{\Lambda})$ be the $\mathbb{Z}/2$-graded right module over $\mathfrak{C}_{op}(M,\alpha,\Lambda) \otimes_{\mathfrak{A}(M,\alpha)} \mathfrak{C}(M,\alpha,\widehat{\Lambda})$ generated by $\mathcal{T}_\alpha(\Lambda \rightarrow\widehat{\Lambda})$.
The differential $\partial_{\Lambda \rightarrow\widehat{\Lambda}}$ in $\mathfrak{B}(M,\alpha,\Lambda \rightarrow\widehat{\Lambda})$ is defined on elements
$\tau \in \mathcal{T}_\alpha(\Lambda \rightarrow\widehat{\Lambda})$ by:
\begin{equation}\label{diffpair}
\partial_{\Lambda \rightarrow\widehat{\Lambda}} \tau = \sum_{\tau' \in \mathcal{T}_\alpha(\Lambda \rightarrow\widehat{\Lambda})}  \tau' B(\tau,\tau').
\end{equation}
Here $B(\tau,\tau')\in \mathfrak{C}_{op}(M,\alpha,\Lambda) \otimes_{\mathfrak{A}(M,\alpha)} \mathfrak{C}(M,\alpha,\widehat{\Lambda})$ is given by
\begin{equation}\label{coeffpair}
B(\tau,\tau') = \sum
_{\bs\sigma,\widehat{\bs\sigma}^-,\bs\gamma} \sigma_1 \dots\sigma_n\widehat{\sigma}^-_1,\dots,\widehat{\sigma}^-_{\widehat{n}}
\gamma_1\dots\gamma_m \widetilde{B}(\tau,\tau',\bs\sigma,\widehat{\bs\sigma}^-,\bs\gamma),
\end{equation}
where the sum is taken over all ordered finite collections $\bs\sigma=(\sigma_1,\dots,\sigma_n)$ of elements in $\mathcal{T}_\alpha(\Lambda)$, all ordered finite collections $\widehat{\bs\sigma}^-=(\widehat{\sigma}^-_1,\dots,\widehat{\sigma}^-_{\widehat{n}})$ of elements in $\mathcal{T}_\alpha(\widehat{\Lambda})$, and all ordered finite collections $\bs\gamma=(\gamma_1,\dots,\gamma_m)$ of elements in $\mathcal{P}(\alpha)$, and $\widetilde{B}(\tau,\tau',\bs\sigma,\widehat{\bs\sigma}^-,\bs\gamma)$ is the algebraic count of the perturbation $\mathcal{Z}^{\op{ind}=1}(\tau,\tau',\bs\sigma, \widehat{\bs\sigma}^-,\bs\gamma)$ of $\overline{\mathcal{M}}^{\op{ind}=1}(\tau,\tau',\bs\sigma, \widehat{\bs\sigma}^-, \bs\gamma)$.

An analog of Hypothesis H and the outline given in \cite[Section 2.8]{SFT} imply that $\partial_{\Lambda \rightarrow\widehat{\Lambda}}^2 = 0$.

\begin{rmk}
The reason we have to use the opposite dga~$(\mathfrak{C}_{op}(M,\alpha,\Lambda),\partial_\Lambda)$ is that for geometric reasons elements of $\mathcal{T}_\alpha(\Lambda)$ have to be multiplied to the left of elements of $\mathcal{T}_\alpha(\Lambda \rightarrow\widehat{\Lambda})$ in the module $\mathfrak{B}(M,\alpha,\Lambda \rightarrow\widehat{\Lambda})$. This implies that $\mathfrak{B}(M,\alpha,\Lambda \rightarrow\widehat{\Lambda})$ must be a left module over $\mathfrak{C}(M,\alpha,{\Lambda})$ and thus a right module over $\mathfrak{C}_{op}(M,\alpha,{\Lambda})$.
Analogously elements of $\mathcal{T}_\alpha(\widehat{\Lambda})$ have to be multiplied to the right of elements of $\mathcal{T}_\alpha(\Lambda \rightarrow\widehat{\Lambda})$, which implies that $\mathfrak{B}(M,\alpha,\Lambda \rightarrow\widehat{\Lambda})$ must be a right module over $\mathfrak{C}(M,\alpha,\widehat{\Lambda})$.

Combining these observations we conclude that the correct choice is to construct $\mathfrak{B}(M,\alpha,\Lambda \rightarrow\widehat{\Lambda})$ as a right module over $\mathfrak{C}_{op}(M,\alpha,\Lambda) \otimes_{\mathfrak{A}(M,\alpha)} \mathfrak{C}(M,\alpha,\widehat{\Lambda})$.
\end{rmk}

\subsection{Augmentations and strip Legendrian contact homology} \label{subsection: linearizations}

The goal of this subsection is to explain augmentations and the linearization process, introduced to Legendrian contact homology by Chekanov \cite{Ch}.  This allows us to define a differential $\bdry^\epsilon_{\Lambda \rightarrow\widehat{\Lambda}}$ on the $\mathbb{Q}$-vector space $LC_{\epsilon}(\alpha, \Lambda \rightarrow\widehat{\Lambda})$ generated by $\mathcal{T}_\alpha(\Lambda \rightarrow\widehat{\Lambda})$ in the presence of an augmentation $\epsilon$.

An {\em augmentation $\epsilon_{\mathfrak{A}(M,\alpha)}$ for $\mathfrak{A}(M,\alpha)$} is a dga morphism $\mathfrak{A}(M,\alpha)\to\mathbb{Q}$ with the trivial differential for $\Q$, i.e.:
\begin{gather}
  \epsilon_{\mathfrak{A}(M,\alpha)}(q\mathbf{1})= q \mbox{ for every } q \in \mathbb{Q},\\
  \epsilon_{\mathfrak{A}(M,\alpha)} \circ \partial= 0,
\end{gather}
where $\mathbf{1}$ is the unit in $\mathfrak{A}(M,\alpha)$.

An {\em augmentation $\epsilon_{\mathfrak{C}(M,\alpha,\Lambda)}$ for $\mathfrak{C}(M,\alpha,\Lambda)$} is a dga morphism $\mathfrak{C}(M,\alpha,\Lambda)\to\mathbb{Q}$, i.e.:
\begin{gather}
  \epsilon_{\mathfrak{C}(M,\alpha,\Lambda)} (q \mathbf{1}) = q \mbox{ for all } q \in \mathbb{Q},\\
  \epsilon_{\mathfrak{C}(M,\alpha,\Lambda)} \circ \partial_{\Lambda} =0,
\end{gather}
where $\mathbf{1}$ is the unit in $\mathfrak{C}(M,\alpha,\Lambda)$. It is straightforward to check that an augmentation $\epsilon_{\mathfrak{C}(M,\alpha,\Lambda)}$ for $\mathfrak{C}(M,\alpha,\Lambda)$ is also an augmentation for the opposite dga $(\mathfrak{C}_{op}(M,\alpha,\Lambda),\partial_\Lambda)$.

\begin{defn}
Given augmentations $\epsilon_{\mathfrak{A}(M,\alpha)}$, $\epsilon_{\mathfrak{C}(M,\alpha,\Lambda)}$ and $\epsilon_{\mathfrak{C}(M,\alpha,\widehat{\Lambda})}$ that satisfy the following compatibility condition:
\begin{gather}
   \epsilon_{\mathfrak{C}(M,\alpha,\Lambda)}(a) = \epsilon_{\mathfrak{C}(M,\alpha,\widehat{\Lambda})} (a) = \epsilon_{\mathfrak{A}(M,\alpha)}(a),
\end{gather}
for every $a \in \mathfrak{A}(M,\alpha)$, the {\em composite augmentation}
\begin{equation} \label{eqn: total augmentation}
\epsilon:\mathfrak{C}_{op}(M,\alpha,\Lambda) \otimes_{\mathfrak{A}(M,\alpha)} \mathfrak{C}(M,\alpha,\widehat{\Lambda})\to\mathbb{Q}
\end{equation}
is a dga morphism that satisfies:
\begin{itemize}
\item $\epsilon$ coincides with $\epsilon_{\mathfrak{A}(M,\alpha)}$ on elements of the coefficient dga $\mathfrak{A}(M,\alpha)$,
\item $\epsilon$ coincides with $\epsilon_{\mathfrak{C}(M,\alpha,\Lambda)}$ on elements of the form $\mathfrak{C}_{op}(M,\alpha,\Lambda) \otimes \mathbf{1}$,
\item $\epsilon$ coincides with $\epsilon_{\mathfrak{C}(M,\alpha,\widehat{\Lambda})}$ on elements of the form $\mathbf{1} \otimes \mathfrak{C}(M,\alpha,\widehat{\Lambda})$,
\item $\epsilon(q\mathbf{1} \otimes \mathbf{1}) = q$ for all $q\in \mathbb{Q}$.
\end{itemize}
\end{defn}

We are now ready to define our differential $\bdry^\epsilon_{\Lambda \rightarrow\widehat{\Lambda}}$ on the $\mathbb{Q}$-vector space $LC_{\epsilon}(\alpha, \Lambda \rightarrow\widehat{\Lambda})$. We define for each $\sigma \in \mathcal{T}_\alpha(\Lambda \rightarrow\widehat{\Lambda})$
\begin{equation}\label{maindiff}
\bdry^\epsilon_{\Lambda \rightarrow\widehat{\Lambda}} \sigma := \sum_{\sigma' \in \mathcal{T}_\alpha(\Lambda \rightarrow\widehat{\Lambda})} \epsilon(B(\sigma,\sigma')) \sigma'.
\end{equation}

It is a theorem of Bourgeois, Ekholm and Eliashberg that $(\bdry^\epsilon_{\Lambda \rightarrow\widehat{\Lambda}})^2 =0$.
To see this we first define a map $F_\epsilon : \mathfrak{B}(M,\alpha,\Lambda \rightarrow\widehat{\Lambda}) \to LC_{\epsilon}(\alpha, \Lambda \rightarrow \widehat{\Lambda})$ by setting $\sigma \in \mathcal{T}_\alpha(\Lambda \rightarrow \widehat{\Lambda})$ and $\widetilde{C} \in \mathfrak{C}(M,\alpha,\Lambda) \otimes_{\mathfrak{A}(M,\alpha)} \mathfrak{C}(M,\alpha,\widehat{\Lambda})$:
\begin{equation}\label{mapFepsilon}
F_\epsilon (\sigma \widetilde{C}) = \epsilon(\widetilde{C}) \sigma.
\end{equation}
A direct computation shows that the diagram \\
\[
\begin{CD}
 \mathfrak{B}(M,\alpha,\Lambda \rightarrow \widehat{\Lambda})   @>{F_\epsilon}>>  LC_{\epsilon}(\alpha, \Lambda \rightarrow \widehat{\Lambda}) \\
 @V{\partial_{\Lambda \rightarrow \widehat{\Lambda}}}VV          @V{\bdry^\epsilon_{\Lambda \rightarrow \widehat{\Lambda}}}VV \\
 \mathfrak{B}(M,\alpha,\Lambda \rightarrow \widehat{\Lambda})  @>{F_\epsilon}>>  LC_{\epsilon}(\alpha, \Lambda \rightarrow \widehat{\Lambda})
\end{CD}
\]
is commutative. It then follows directly from the fact that $(\partial_{\Lambda \rightarrow \widehat{\Lambda}})^2=0$ that $(\bdry^\epsilon_{\Lambda \rightarrow \widehat{\Lambda}})^2 =0$.

In the special case that
\begin{itemize}
\item there is no index one holomorphic plane asymptotic to a periodic orbit of $R_\alpha$ and
\item there is no index one once-punctured holomorphic disk asymptotic to a Reeb chord for $\Lambda$ or $\widehat{\Lambda}$,
\end{itemize}
the trivial map $\epsilon$ which vanishes for all Reeb chords and Reeb orbits and restricts to the identity on $\mathbb{Q}$ is an augmentation. The corresponding linearized Legendrian contact homology $LCH_\epsilon (\alpha, \Lambda \rightarrow \widehat{\Lambda})$ whose differential counts pseudoholomorphic strips is called the {\em strip Legendrian contact homology of $\Lambda \rightarrow \widehat{\Lambda}$}. It is denoted
$LCH_{st} (\alpha, \Lambda \rightarrow \widehat{\Lambda})$.

\subsection{Pullback augmentations and linearizations}

As shown in \cite{BEE} one can use exact symplectic cobordisms to pull back linearizations. We explain this in more detail.

Let $(\mathbb{R}\times M,d\sigma)$ be an exact symplectic cobordism from a contact form $\alpha$ for $(M,\xi)$ to a contact form $\alpha^0$ for $(M,\xi)$. Let $L,\widehat{L} \subset (\mathbb{R}\times M,d\sigma)$ be disjoint, cylindrical SFT-admissible exact Lagrangian cobordisms from Legendrian knots $\Lambda,\widehat{\Lambda}$ to themselves.

The cobordisms $(\R\times M,d\sigma,L)$ and $(\R\times M,d\sigma,\widehat{L})$ induce chain maps
$$\Psi_{\R\times M,d\sigma,L}:\mathfrak{C}(M,\alpha,\Lambda) \to \mathfrak{C}(M,\alpha^0,\Lambda),$$
$$\Psi_{\R\times M, d\sigma,\widehat{L}}: \mathfrak{C}(M,\alpha,\widehat\Lambda) \to \mathfrak{C}(M,\alpha^0,\widehat\Lambda)$$
By the arguments of \cite{BEE,SFT}, the chain maps $\Psi_{\R\times M,d\sigma,L}$ and $\Psi_{\R\times M,d\sigma,\widehat{L}}$ are quasi-isomorphisms. It is straightforward to check that the fact that $\Psi_{\R\times M,d\sigma,L}:\mathfrak{C}(M,\alpha,\Lambda) \to \mathfrak{C}(M,\alpha^0,\Lambda)$ is a quasi-isomorphism implies that $\Psi_{\R\times M,d\sigma,L}:\mathfrak{C}_{op}(M,\alpha,\Lambda) \to \mathfrak{C}_{op}(M,\alpha^0,\Lambda)$ is  a quasi-isomorphism.

If $$\epsilon^0 : \mathfrak{C}_{op}(M,\alpha^0,\Lambda)\otimes_{\mathfrak{A}(M,\alpha^0)} \mathfrak{C}(M,\alpha^0,\widehat{\Lambda}) \to \mathbb{Q}$$
is a composite augmentation, then we can define a composite augmentation
$$\epsilon^0 \circ (\Psi_{\R\times M,d\sigma,L}\otimes \Psi_{\R\times M, d\sigma,\widehat{L}}): \mathfrak{C}_{op}(M,\alpha,\Lambda)\otimes_{\mathfrak{A}(M,\alpha)} \mathfrak{C}(M,\alpha,\widehat{\Lambda}) ) \to \mathbb{Q}.$$

\subsection{Action filtration}

There is an action filtration on contact homology and Legendrian contact homology.

To see this we first note that if $J\in \mathcal{J}(\alpha)$ and
$$\widetilde{u}: (S\setminus \Omega,i) \to (\mathbb{R}\times M,J)$$
is a finite energy $J$-holomorphic curve in the symplectization $(\mathbb{R}\times  M,d (e^s\alpha))$, then $\int_{S} \widetilde{u}^* d\alpha \geq 0$ and moreover $\int_{S} \widetilde{u}^* d\alpha=0$ if and only if $\widetilde{u}$ is either a trivial strip over a Reeb chord or a trivial cylinder over a Reeb orbit; see \cite{Ab,H}.

Let $(z^+_1,\dots,z^+_n)$ be the positive punctures of $\widetilde{u}$ and $(z^-_1,\dots,z^-_m)$ be the negative punctures of $\widetilde{u}$. If $A(z^{\pm}_j)$ is the action of the Reeb orbit or Reeb chord that $\widetilde{u}$ limits to near $z^{\pm}_j$, one obtains the following formula:
\begin{equation}
\int_{S} \widetilde{u}^* d\alpha = \sum_{j=1}^{n}A(z^{+}_j) - \sum_{j=1}^{m}A(z^{-}_j).
\end{equation}
It is immediate that $\mathcal{M}(\gamma^+,\bs\gamma^-;J)=\varnothing$ if $A(\gamma^+) \leq \sum_{j=1}^{n} A(\gamma^-_j) $ and analogous statements hold for ${\mathcal{M}}(\sigma^+,\bs\sigma^-,\bs\gamma^-;J)$ and ${\mathcal{M}}(\tau^+,\tau^-,\bs\sigma^-,\widehat{\bs\sigma}^-,\bs\gamma^-;J)$.

These considerations imply that:
\begin{itemize}
\item the subalgebra $\mathfrak{A}^{\leq T}(M,\alpha)$ of $\mathfrak{A}(M,\alpha)$ generated by the Reeb orbits of $\alpha$ with action $\leq T$ is a sub-dga of $(\mathfrak{A}(M,\alpha),\bdry)$,
\item the algebra $\mathfrak{C}^{\leq T}(M,\alpha,\Lambda)$ of $\mathfrak{C}(M,\alpha,\Lambda)$ generated by Reeb orbits and Reeb chords with action $\leq T$ is a sub-dg-algebra of $(\mathfrak{C}(M,\alpha,\Lambda),\partial_{\Lambda})$, and a module over $\mathfrak{A}^{\leq T}(M,\alpha)$,
\item the subset $\mathfrak{B}^{\leq T}(M,\alpha,\Lambda \rightarrow \widehat{\Lambda})$ of $\mathfrak{B}(M,\alpha,\Lambda \rightarrow \widehat{\Lambda})$ generated by Reeb orbits and Reeb chords with action $\leq T$ endowed with the differential $\partial_{\Lambda \to \widehat{\Lambda}}$ is a dg-module over $\mathfrak{C}_{op}^{\leq T}(M,\alpha,\Lambda) \otimes_{\mathfrak{A}^{\leq T}(M,\alpha)} \mathfrak{C}^{\leq T}(M,\alpha,\widehat{\Lambda})$.
\end{itemize}

In the same way as in Section~\ref{subsection: linearizations} we can consider augmentations on $(\mathfrak{A}^{\leq T}(M,\alpha),\bdry)$, $(\mathfrak{C}^{\leq T}(M,\alpha,\Lambda),\partial_{\Lambda})$ and $(\mathfrak{B}^{\leq T}(M,\alpha,\Lambda \rightarrow \widehat{\Lambda}),\partial_{\Lambda \rightarrow \widehat{\Lambda}})$.

\section{Constructions}\label{section: constructions}

The goal of this section is to construct the Legendrians $\Lambda$, $\widehat{\Lambda}$, and the contact forms $\alpha_{\epsilon, \epsilon'}$ that are used in the proof of Theorem~\ref{thm: main}.

\subsection{Review of \cite{CH2}}

We first recall some notation and the construction of a family of controlled contact forms from \cite{CH2}.
Let $(S,h)$ be a supporting open book decomposition of $(M,\xi )$ where:
\begin{itemize}
\item $\partial S$ is connected;
\item $h$ is homotopic to a pseudo-Anosov homeomorphism $\psi$ of $S$.
\end{itemize}

The singularities of the stable foliation $\F$ along $\partial S$ are $x_1,\dots ,x_n$ and the ones in the interior of $S$ are $y_1,\dots ,y_q$.   We first consider a small neighborhood $N(S)$ of $\partial S$ in $S$ such that the component of $\partial N(S)$ contained in the interior of $S$ is a concatenation of arcs that are alternately tangent to and transverse to $\F$. Let $a_i$ be the transverse arc of $\bdry N(S)$ corresponding to $y_i$. Then we look at the separatrices emanating from the interior singularities $y_j$ and cut them at the first moment they enter $N(S)$. The corresponding arcs from $y_j$ to $\partial N(S)$ are called $Q_{j1}', \dots, Q_{jm_j}'$. Recalling that $P_i$ is the prong emanating from $x_i$, we similarly let $P_i'$ be the first component of $P_i\cap (S-int(N(\bdry S)))$ that can be reached from the singular point $x_i$, traveling inside an arc (called $p_i$) of $P_i$.  The arc  $P_i'$ has endpoints on $int(a_i)$ and some $int(a_{i'})$. Now, for each $j$, we define $N(y_j)$ to be a sufficiently small neighborhood of $Q_{j1}'\cup \dots\cup Q_{jm_j}'$ in $S-int(N(\bdry S))$ such that $\bdry N(y_j)$ is a concatenation of arcs that are alternately tangent to and transverse to $\F$.

We define the subset
\begin{equation}
S'' =S -\cup_{1\leq j\leq q} ~int(N( y_j ))-  \cup_{1\leq i\leq n} ~int(N(P_i')) - int (N(\partial S )).
\end{equation}
Here the boundary of the neighborhoods $N(P_i')$ are also alternatively tangent to and transverse to $\F$.

By \cite[Claim 6.4]{CH2} we know that $\F \vert_{S''}$ is orientable.

In \cite[Proposition 6.3]{CH2}, the second and third authors construct a $1$-form $\beta$ on $S$ which has the following properties:
\begin{itemize}
\item[(P1)] $\ker \beta=\F$ on $S''$;
\item[(P2)] $\beta$ is nonsingular (i.e., has no zeros) away from $N(\partial S)$ and from a very small neighborhood of the interior singularities $y_i$; 
\item[(P3)] $\beta$ has an elliptic singularity in each component of $N(\partial S)-\cup_{1\leq j\leq n} p_i$.
\end{itemize}
An important point in what follows is that
\begin{itemize}
\item[(P4)] a nonempty subset of the leaves from each elliptic singularity $e$ in $N(\partial S)-\cup_{1\leq j\leq n} p_i$ go to the boundary $\partial S$.
\end{itemize}
This means that we can take the $d\beta$-area of a small disk around each elliptic singularity $e$ to be arbitrary large without affecting the value of $\beta$ in $S-N(\partial S)$.

\subsection{Construction of $\Lambda$ and $\widehat\Lambda$} \label{subsection: construction of Lambda}

Recall the Rademacher function $\Phi$ from Section~\ref{subsection: Rademacher function}. The following is the main technical lemma:

\begin{lemma}\label{lemma: technical}
There exists a $1$-form $\beta$ as above such that there exists an immersed curve $L$ in $S$ such that:
\begin{itemize}
\item[(A)] $\int_{L} \beta =0$;
\item[(B)] for every immersed arc $\delta$ in $L$, we have $\Phi (\delta ) =0$;
\item[(C)] every closed curve $\delta$ which is $\pi_1$-injective in $L$ and which turns at most once at a double point of $L$ is $\pi_1$-injective in $S$.
\end{itemize}
\end{lemma}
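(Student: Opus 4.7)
The plan is to construct $L$ as an almost-closed leaf segment of the stable foliation $\F$ closed up by a short transverse arc. First I would pick a nonsingular point $p_0$ in the interior of $S''$ and consider the leaf $\ell$ of $\F$ through $p_0$. Using the minimality of the stable foliation of a pseudo-Anosov homeomorphism (away from its singular leaves), I would find a point $p_1 \in \ell$, arbitrarily far along $\ell$ from $p_0$, that lies in the same flow box of $\F$ as $p_0$. Let $\ell_T$ be the leaf segment from $p_0$ to $p_1$. Closing $\ell_T$ with a short arc $\eta$ transverse to $\F$ inside that flow box produces an immersed closed curve $L = \ell_T \cup \eta$ that hugs $\F$.

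Next I would check property (B). The Rademacher function $\Phi(\delta)$ is a finite sum of algebraic intersection numbers of lifts $\widetilde{\delta}$ with the union $P_{\widetilde{D}}$ of prong-lifts based on lifts $\widetilde{D}$ of $\bdry S$. Since each prong $P_i$ is itself a leaf of $\F$, the segment $\ell_T$ (lying on another leaf) cannot intersect any prong transversally, so it contributes zero. Choosing the flow box of $\eta$ to avoid the finitely many prong leaves ensures that $\eta$ contributes zero as well, so $\Phi(\delta) = 0$ for every immersed sub-arc $\delta$ of $L$.

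For property (A), I would exploit (P1): on $S''$ the form $\beta$ vanishes on $\F$, so $\int_{\ell_T \cap S''} \beta = 0$. The only possible nonzero contributions to $\int_L \beta$ come from $\eta$ and from any pieces of $\ell_T$ that cross $N(\bdry S)$, $N(y_j)$, or $N(P_i')$. By (P4) I may enlarge (or tune) the $d\beta$-area of a small disk around an elliptic singularity of $\beta$ inside $N(\bdry S)-\cup p_i$ to any prescribed real value, without changing $\beta$ on $S - N(\bdry S)$. Routing a small arc of $L$ through such a disk and adjusting the local $d\beta$-mass allows us to cancel the remaining contributions and make $\int_L \beta = 0$ exactly.

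For property (C), since $\ell_T$ is a long segment of a leaf of the pseudo-Anosov stable foliation, $L$ is carried by a train track $\tau$ approximating $\F$. Any closed sub-loop $\delta \subset L$ that is $\pi_1$-injective in $L$ and turns at most once at a double point is, after homotopy, a recurrent loop carried by $\tau$; because $\F$ has no closed leaves, no Reeb components, and no null-homotopic saddle connections, every such carried loop is essential in $\pi_1(S)$. The main obstacle I foresee is arranging (A), (B), and (C) compatibly by a single choice of $\ell_T$ and $\eta$: the arc $\eta$ must be short and prong-free (for (B)), while simultaneously being positioned so that the integral imbalance from $\eta$ and from the crossings of $\ell_T$ through $S - S''$ can be absorbed near a single tunable elliptic singularity (for (A)), and $\ell_T$ must be generic enough that the recurrences producing double points never create trivial short sub-loops (for (C)). Controlling all three simultaneously is where the density and recurrence properties of the pseudo-Anosov foliation are used most strongly.
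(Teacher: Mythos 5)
Your construction follows the same route as the paper: take a long leaf segment of $\F$, close it up by a short transverse arc inside a flow box, and then adjust the period $\int_L\beta$ by routing an arc of $L$ around a ``tunable'' elliptic singularity of $\ker\beta$ inside $N(\partial S)$. The verification of (B) for the leaf segment (prongs are themselves leaves, so no transverse intersections) and the use of (P1), (P4) for (A) are all in the spirit of the paper.

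There is, however, a genuine gap in your step for (A). You assert that the $d\beta$-area of a small disk around an elliptic singularity can be tuned to \emph{any prescribed real value}. Since $d\beta$ is an area form near the singularity, this quantity is nonnegative; it can be made arbitrarily large, but not of either sign. The contribution of the detour to $\int_L\beta$ is therefore $\pm(\text{area})$, where the sign is dictated by the orientation with which the (already oriented) curve $L$ winds around the chosen singularity. As written, your construction can only shift $\int_L\beta$ in one direction and may fail to reach $0$ if $\int_{L_0}\beta$ has the wrong sign. The paper resolves this by imposing the same-sign condition (its (L3)) on the two intersection points of the closing arc with the leaf segment, obtained from a careful recurrence argument: picking a first return $r_1$ and, if the signs disagree, a second return $r_2$ to the subarc $[r_0,r_1]$. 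With (L3) in hand, there are two available finger moves---one along each end of the leaf arc---that wind around the respective elliptic singularities $e$, $e'$ with \emph{opposite} orientations, so both signs of the correction are achievable. Your proposal never secures this same-sign condition, and without it the cancellation step is not guaranteed. A secondary point, which you flag but leave unresolved, is that the routing arc must be chosen tangent to $\F$ outside $N(\partial S)$ and confined to a component of $N(\partial S)-\cup p_i$ inside, so that (B) is preserved; and one must check that the finger move introduces only bigons and no monogons so that (C) survives.
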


\begin{proof}
We claim that there exists an oriented embedded closed curve $L_0 =\delta_0 \cup \delta_1$ (see Figure~\ref{figure: L_0}) in $S$ such that:
\begin{enumerate}
\item[(L1)] $\delta_0$ is a sufficiently short arc transverse to $\F$, $\delta_1$ is a (long) arc tangent to $\F$, and $int(\delta_0)\cap int(\delta_1)=\varnothing$ \item[(L2)] $\delta_1$ is not contained in a separatrix;
\item[(L3)] $\delta_1$ is positively (or negatively) transverse to $\delta_0$ at its two boundary points and the sign is the same for both;
\item[(L4)] every closed curve on $S$ which is the union of a nontrivial embedded subarc of $\delta_0$ and an arc tangent to $\mathcal{F}$ is $\pi_1$-injective in $S$.
\end{enumerate}
\begin{figure}[ht]
\begin{center}
\begin{overpic}[width=4cm]{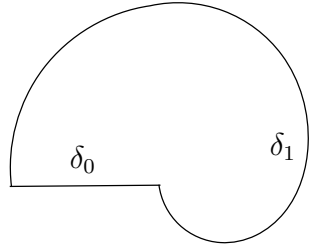}
\put(20,25.5){$\delta_0$}  \put(86.5,30){$\delta_1$}
\end{overpic}
\end{center}
\caption{The curve $L_0$} \label{figure: L_0}
\end{figure}
This follows from the density of the leaves of the stable foliation $\F$ and we briefly sketch the idea. Let $r_0\in int(S)$ be a point not on a separatrix and let $\mathcal{L}$ be a half-leaf of $\F$ starting from $r_0$.  We take a short arc $c$ that is transverse to $\F$ and has $r_0$ as an endpoint and consider the first point of return $r_1$ of $\mathcal{L}$ to $c$.  If $\mathcal{L}$ and $c$ intersect with the same sign at both $r_0$ and $r_1$, then we are done.  Otherwise, consider the second point of return $r_2$ of $\mathcal{L}$ to the subarc $[r_0,r_1]$ of $c$. Then either $r_0$ and $r_2$ have the same sign or $r_1$ and $r_2$ have the same sign, and we are done.

(L4) is due to the fact that leaves of $\F$ are quasi-geodesic for some hyperbolic metric on $S$ and $\delta_0$ is sufficiently short.

Next we apply the construction of $\beta$ in such a way that $N(\partial S)\cap L_0 =\emptyset$ and $\delta_1 \subset S''$. This can be done by taking $N(S)$, $\cup_{1\leq j\leq q} ~int(N( y_j ))$ and $ \cup_{1\leq i\leq n} ~int(N(P_i'))$ to be sufficiently small.  (B) and (C) hold for $L_0$ but
$\int_{L_0} \beta =\int_{\delta_{0}} \beta$ has no reason to vanish.

We will now modify $L_0$ to $L$ so that $\int_L\beta=0$ while preserving conditions (B) and (C); see Figure~\ref{figure: modification}. We extend $\delta_1$ at both ends by arcs $a$ and $a'$ that are tangent to $\F$ until they hit $N(\partial S)$ and then go inside $N(\partial S)$ to the elliptic singularity $e$ or $e'$ of $\ker \beta$
contained in the appropriate component of $N(\partial S)-\cup_{1\leq j\leq n} p_i$.
Notice here that $a$ and $a'$ might intersect $\delta_0$ at some interior points.
Depending on the sign of $\int_{L_0} \beta$, we take $L$ to be the result of applying a finger move to $L_0$ along $a$ or $a'$ so that it circles around $e$ or $e'$; say we are using $a$. (The orientation condition (L3) tells that the two possibilities contribute different signs.) Applying the finger move means we delete a small portion of $\delta_0$ from $L_0$ and replace by a long arc with the same endpoints that is tangent to $\F$ in $S''$ and that goes around $e$ in $N(\partial S )$ without crossing the $p_i$'s.
\begin{figure}[ht]
\begin{center}
\psfragscanon
\psfrag{a}{\small $\delta_1$}
\psfrag{c}{\small $\delta_0$}
\psfrag{d}{\small $a$}
\psfrag{e}{\small $e$}
\psfrag{f}{\small $\bdry S$}
\psfrag{g}{\small $L$}
\includegraphics[width=8cm,grid]{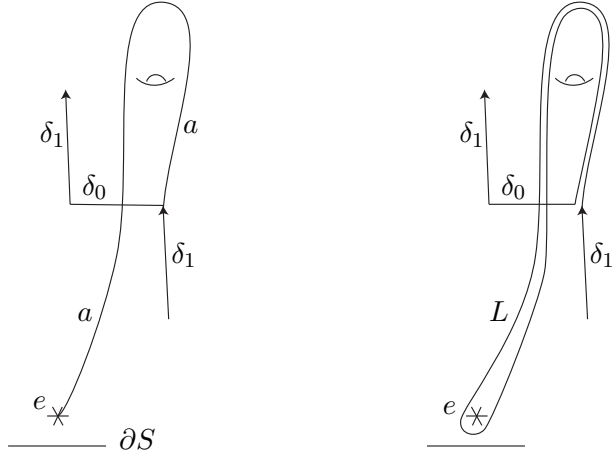}
\end{center}
\caption{Modification of $L_0$ along $a$ which gives $L$} \label{figure: modification}
\end{figure}
Since the $d\beta$-area around $e$ can be made arbitrarily large, we can use it to counterbalance $\int_{L_0} \beta$.  Hence it is possible to obtain $\int_L\beta=0$. We leave it to the reader to verify that (B) and (C) hold.  Note that the finger move creates bigons bounded by $L$ but not monogons.
\end{proof}

We lift the curve $L$ from Lemma~\ref{lemma: technical} to an embedded Legendrian curve $\Lambda$ in $(S\times [-c,c] ,dt+\epsilon \beta )$, where $t$ is the $[-c,c]$-coordinate. This is done by observing that the contact form $dt+\varepsilon \beta$ defines a connection for the trivial fibration $\pi : S\times \R \to S$.
The condition $\int_{L} \beta =0$ ensures that every lift of $L$ is a closed curve. Starting a lift at a point of altitude $t=0$, if $\varepsilon>0$ is sufficiently small, then the entire Legendrian lift $\Lambda$ of $L$ lies in the region $S\times [-c,c] \subset S\times \R$.

The Legendrian $\widehat{\Lambda}$ is taken to be a small pushoff of $\Lambda$ in the Reeb direction which is also contained in $S\times [-c,c] \subset S\times \R$.

\subsection{Construction of the contact forms $\alpha_{\epsilon,\epsilon'}$}\label{subsection: construction of forms}

The construction of the contact form $\alpha_{\epsilon, \epsilon'}$ and the corresponding Reeb vector field $R_{\epsilon,\epsilon'}$ then follows the lines of \cite[Section 6.2]{CH2}, where we are using the $1$-form $\beta$ satisfying Lemma~\ref{lemma: technical}. The only difference is that we plug into the mapping torus $(S\times\R)/(x,m)\sim(h(x),m-1)$ a copy of the $t$-invariant contact structure $(S\times [-c,c] , dt+\varepsilon \beta )$ for sufficiently small $c$ and $\varepsilon$. This part of the open book contains the Legendrians $\Lambda$ and $\widehat\Lambda$.

\section{Control of once-and twice-punctured holomorphic disks}

\begin{thm} \label{thm: once-punctured disks}
Suppose $k\geq 3$. Given $N\gg 0$, for sufficiently small $\epsilon,\epsilon'>0$, there are no once-punctured holomorphic disks in $\R\times M$ asymptotic to an $R_{\epsilon,\epsilon}$-Reeb chord of $\Lambda$ or $\widehat{\Lambda}$ of $\alpha_{\epsilon,\epsilon'}$-action $\leq N$ and with boundary on the symplectization of $\Lambda$ or $\widehat{\Lambda}$.
\end{thm}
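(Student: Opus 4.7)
The plan is to adapt Step~(2) of \cite{CH2} (recalled in the introduction) to the Legendrian setting. I would derive a contradiction with positivity of intersection between $\widetilde u$ and the trivial $J$-holomorphic cylinder $\mathcal{C}=\R\times\bdry S$ over the binding, using the Rademacher function $\Phi$ of Section~\ref{subsection: Rademacher function} as the quantitative tool.

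Suppose, for contradiction, that $\widetilde u=(r,u):(\mathbb{D}\setminus\{1\},i_0)\to(\R\times M,J)$ is a once-punctured holomorphic disk of \textbf{Type B} with $\bs\sigma^-=\bs\gamma^-=\varnothing$, boundary on $\R\times\Lambda$ (the case of $\widehat{\Lambda}$ being symmetric), and positively asymptotic at $1$ to an $R_{\epsilon,\epsilon'}$-Reeb chord $\sigma^+$ of action $A(\sigma^+)\leq N$. Since $\Lambda\subset S\times[-c,c]$ lies in the interior of the pages by Section~\ref{subsection: construction of Lambda}, $\R\times\Lambda$ is disjoint from $\mathcal{C}$, and for $\epsilon,\epsilon'$ small enough every Reeb chord of action at most $N$ also stays away from $\bdry S$. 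Consequently, $\widetilde u^{-1}(\mathcal{C})$ is compactly contained in the interior of $\mathbb{D}\setminus\{1\}$, the intersection number $[\widetilde u]\cdot[\mathcal{C}]$ is well defined, and positivity of intersection yields $[\widetilde u]\cdot[\mathcal{C}]\geq 0$.

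The core step is a Rademacher-type computation of $[\widetilde u]\cdot[\mathcal{C}]$, along the lines of the arguments of \cite{CH2}. After capping off $\sigma^+$ by a reference arc in $\Lambda$ connecting its two endpoints, the composition of $u$ with the mapping torus projection onto $S$ yields a closed loop in $S$ whose algebraic intersection with $\bdry S$ equals $[\widetilde u]\cdot[\mathcal{C}]$ and whose value under $\Phi$ computes this intersection up to a bounded error $C$ depending only on fixed reference data. This loop decomposes as the concatenation of (i) the projection of the boundary arc $u(\partial\mathbb{D}\setminus\{1\})$, which lies in $L$ and satisfies $\Phi=0$ by property (B) of Lemma~\ref{lemma: technical}, and (ii) the projection of $\sigma^+$ together with the capping, which traverses the mapping torus some number $m\geq 1$ of times and, through the pseudo-Anosov monodromy with fractional Dehn twist coefficient $k/n$, contributes to $\Phi$ a term of magnitude at least $mk$.

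Combining these contributions gives an estimate of the shape $[\widetilde u]\cdot[\mathcal{C}]\leq -mk+C\leq -k+C$. After arranging the construction of $L$ and $\beta$ so that $C<k$, which is possible because $C$ depends only on the fixed data $\delta_0$, $\delta_1$ and on the finger-move correction of Lemma~\ref{lemma: technical} and not on $\widetilde u$, the hypothesis $k\geq 3$ forces $[\widetilde u]\cdot[\mathcal{C}]<0$, contradicting positivity of intersection. The main obstacle is the Rademacher bookkeeping of the previous paragraph: one must verify that the capping and finger-move correction contribute only a bounded error independent of $\widetilde u$, and that property (B) genuinely cancels the potentially large $\Phi$-contribution of the long arc $\delta_1$ tangent to $\F$, which would otherwise force $k$ to be much larger than $3$.
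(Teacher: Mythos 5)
Your overall strategy — positivity of intersection with the trivial cylinder $\mathcal{C}$ over the binding $K$, combined with a Rademacher-function estimate — is the same one the paper imports from \cite{CH2}, but there is a genuine gap in your case analysis. You assert without justification that $\sigma^+$ (together with its capping) ``traverses the mapping torus some number $m\geq 1$ of times.'' This fails for the short Reeb chords of $\Lambda$ inside $S\times[-c,c]$: there $R_{\epsilon,\epsilon'}$ is approximately $\partial_t$, so those chords are essentially vertical segments sitting over the double points of $L$ and have $m=0$. For such a chord, if $u$ is moreover disjoint from $\R\times K$, then $[\widetilde u]\cdot[\mathcal{C}]=0$ — perfectly consistent with positivity of intersection — and your argument produces no contradiction whatsoever. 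This is precisely the case the paper handles by a completely different mechanism: when $u$ misses the binding, the projection of $u$ to $S$ is a disk whose boundary is a loop in $L$ turning once at a double point, and by the $\pi_1$-injectivity condition (L4) (equivalently, property (C) of Lemma~\ref{lemma: technical}) such a loop cannot be contractible in $S$. You never invoke (L4) or (C), so your proof cannot close for those chords.

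A secondary inaccuracy: the quantity that drives the $\Phi$-estimate in \cite{CH2} is the intersection number $[\widetilde u]\cdot[\mathcal{C}]$, not the chord's mapping-torus winding $m$. After removing a neighborhood of $K$ and cutting along a page, each positive intersection point of $u$ with $\R\times K$ contributes roughly $k$ to $\Phi(\partial\mathcal{D})$, while the winding $m$ only supplies the lower bound $[\widetilde u]\cdot[\mathcal{C}]\geq m$ via linking numbers and can be zero. In particular, when $m=0$ but $u$ does meet $\R\times K$, the cutting-and-$\Phi$ argument still applies and gives $\Phi(\partial\mathcal{D})\neq 0$ even though $\partial\mathcal{D}$ bounds a disk, a contradiction. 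So the correct dichotomy is whether $u$ meets $\R\times K$, not whether $\sigma^+$ winds around the mapping torus. Once that is fixed, the role of property (B) of Lemma~\ref{lemma: technical} is exactly what you suspected: it annihilates the $\Phi$-contribution of the boundary arc lying in $L$, and the single extra concatenation this arc introduces is what raises the threshold from $k\geq 2$ in the absolute case of \cite{CH2} to $k\geq 3$ here.
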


\begin{proof}
The proof is the same as those of \cite[Theorems 8.1 and 9.2]{CH2}. We sketch it for $\Lambda$ (the proof is identical for $\widehat{\Lambda}$). Arguing by contradiction, suppose that such a once-punctured holomorphic disk $\widetilde u=(r,u)$ exists for $\Lambda$.

First suppose that the $M$-component $u$ does not intersect the binding $K$.  In this case the Reeb chord must correspond to a double point of $L$ and there is a projection of $u$ to $S$ which takes the boundary of $u$ to $L$.  However by (L4) no curve in $L$ that turns once at a double point is contractible, a contradiction.

On the other hand, if $u$ intersects $K$, then we remove a neighborhood $N(K)$ of $K$, choose a page which is disjoint from $\Lambda$ and cut $M-N(K)$ along this page. As explained in \cite[Sections 8 and 9]{CH2}, if we perform these cutting operations on $u$ and project to $S$ using the projection $\pi_S:S\times[0,1]\to S$ onto the first factor, then we obtain a disk $\mathcal{D}$.

We claim that $\Phi(\bdry\mathcal{D})\not=0$. Indeed, as in the absolute case $\bdry \mathcal{D}$ can be written as a concatenation of arcs, where the only difference here is that one of these arcs is the $\pi_S$-projection of an arc $d$ that lies in $\Lambda$. By construction, $\pi_S(d)\subset L$ and hence satisfies $\Phi(\pi_S(d))=0$. Since there is also one extra concatenation to consider, we lose an extra degree of precision and conclude that if $k\geq 3$ (instead of $2$), such an disk cannot exist.
\end{proof}

Since we already know from \cite[Theorem 8.1]{CH2} that, given $N\gg 0$, for sufficiently small $\epsilon,\epsilon'>0$ there are no holomorphic planes asymptotic to closed orbits of $R_{\epsilon,\epsilon'}$ of $\alpha_{\epsilon,\epsilon'}$-action $\geq N$ when $k\geq 2$, we conclude that the strip Legendrian contact homology of $\Lambda \rightarrow \widehat{\Lambda}$ is well-defined when $k\geq 3$.

\begin{thm} \label{thm: strips}
Suppose $k\geq 5$. Given $N\gg 0$, for sufficiently small $\epsilon,\epsilon'>0$, there are no holomorphic strips between $R_{\epsilon,\epsilon'}$-Reeb chords from $\Lambda$ to $\widehat{\Lambda}$ of $\alpha_{\epsilon,\epsilon'}$-action $\leq N$ which intersect $\R$ times the binding nontrivially.
\end{thm}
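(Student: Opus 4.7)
The plan is to adapt the argument of \cite[Theorems 8.1 and 9.2]{CH2} together with the cutting-and-projection argument used in Theorem~\ref{thm: once-punctured disks}. Suppose for contradiction that such a holomorphic strip $\widetilde u=(r,u)$ exists, with positive and negative asymptotics at Reeb chords $\tau^\pm$ from $\Lambda$ to $\widehat\Lambda$, and whose $M$-component $u$ meets the binding $K$ nontrivially. Since the trivial cylinder $\R\times K$ is $J$-holomorphic, positivity of intersection guarantees that the algebraic intersection number $\widetilde u\cdot(\R\times K)$ is a positive integer and that each local intersection is positive.

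First, I would choose a tubular neighborhood $N(K)$ of the binding and a page $\Sigma$ of the open book disjoint from $\Lambda\cup\widehat\Lambda$; this is possible since both Legendrians are contained in the piece $S\times[-c,c]$ produced in Section~\ref{subsection: construction of Lambda}. Removing $N(K)$ and cutting $M-N(K)$ along $\Sigma$, and then performing the corresponding cuts on $u$ and projecting via $\pi_S:S\times[0,1]\to S$, one obtains a collection of immersed surfaces in $S$, as in \cite[Sections 8 and 9]{CH2}. Focus on the distinguished component $\mathcal D$ that contains the $\pi_S$-projections of the two Legendrian boundary arcs of $\widetilde u$. Its boundary $\partial\mathcal D$ decomposes as a concatenation of arcs of four types: (i) an arc $d$ coming from the boundary of $u$ on $\R\times\Lambda$, whose $\pi_S$-projection lies in $L$; (ii) an arc $\widehat d$ coming from the boundary of $u$ on $\R\times\widehat\Lambda$, whose $\pi_S$-projection lies in $L$ up to an arbitrarily small perturbation (since $\widehat\Lambda$ is a Reeb pushoff of $\Lambda$); (iii) arcs on $\partial N(K)$; and (iv) arcs on copies of $\Sigma$. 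By Lemma~\ref{lemma: technical}(B), both $\pi_S(d)$ and $\pi_S(\widehat d)$ have $\Phi=0$.

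Since $\mathcal D$ is an immersed disk in $S$, its boundary lifts to a null-homotopic loop in $\widetilde S$, so $\Phi(\partial\mathcal D)=0$. On the other hand, each positive intersection of $\widetilde u$ with $\R\times K$ forces $\partial\mathcal D$ to wrap through the pronged neighborhood of $\partial S$, contributing roughly $k$ to $\Phi$, while each concatenation introduces an error of at most $1$ by the quasi-morphism property~(1) of $\Phi$. Compared to the cylinder argument of \cite[Theorem 9.2]{CH2}, which requires $k\geq 3$, the strip carries exactly two additional boundary arcs on Legendrians (one on $\Lambda$, one on $\widehat\Lambda$), which introduce two additional concatenations and hence two additional losses of one degree of precision each. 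This bumps the threshold from $k\geq 3$ up to $k\geq 5$, at which point $\Phi(\partial\mathcal D)$ is forced to be strictly positive, contradicting the vanishing above.

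The main obstacle will be the careful combinatorial bookkeeping of how the cutting operation decomposes $\partial\mathcal D$ and precisely how many concatenation errors are introduced by the two Legendrian boundary arcs, so that the threshold $k\geq 5$ really does suffice. A secondary technical point is to verify that for sufficiently small pushoff parameter $\epsilon'$ the arc $\pi_S(\widehat d)$ lands in a tubular neighborhood of $L$ on which property~(B) of Lemma~\ref{lemma: technical} still yields $\Phi(\pi_S(\widehat d))=0$; this should follow immediately from the construction in Section~\ref{subsection: construction of Lambda}, but it is the place where the closeness of $\widehat\Lambda$ to $\Lambda$ is actually used.
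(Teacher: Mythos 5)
Your proposal follows exactly the paper's argument: it mimics the cutting-and-projecting strategy of \cite[Theorem 9.2]{CH2} together with the proof of Theorem~\ref{thm: once-punctured disks}, uses Lemma~\ref{lemma: technical}(B) to conclude that $\Phi$ vanishes on the two Legendrian boundary arcs, and attributes the jump from $k\geq 3$ to $k\geq 5$ to the two extra concatenations, which is precisely what the paper does. Your ``secondary technical point'' about the projection of the $\widehat\Lambda$ boundary arc is actually vacuous here, since a Reeb pushoff inside $(S\times[-c,c],dt+\varepsilon\beta)$ is a pure translation in the $t$-direction, so $\pi_S(\widehat\Lambda)=L$ on the nose.
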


\begin{proof}
The proof is again same as that of \cite[Theorem 9.2]{CH2} that prevents holomorphic cylinders from intersecting the binding, except that we have to concatenate two extra arcs coming from the boundary components of the holomorphic strip that map to $\Lambda$ and $\widehat{\Lambda}$. Since $\Phi=0$ on these extra arcs by Lemma~\ref{lemma: technical}(B), the theorem follows without more effort at the cost of replacing the inequality $k\geq 3$ by $k\geq 5$ to take care of the possible extra leaking of $2$ due to the two extra concatenations.
\end{proof}

\section{Growth of the number of Reeb chords and positivity of topological entropy}

\subsection{A heuristic argument} \label{subsection: growth}

Let $(M,\xi)$ be a contact $3$-manifold which admits a supporting open book decomposition whose binding is connected and whose monodromy is homotopic to a pseudo-Anosov homeomorphism with fractional Dehn twist coefficient $\frac{k}{n}$ with $k\geq 5$.  Let $\Lambda$ and $\widehat{\Lambda}$ be a pair of disjoint Legendrian knots in $(M,\xi)$ constructed in Section~\ref{subsection: construction of Lambda}.

We give a heuristic argument for the exponential growth of strip Legendrian contact homology.  Assume that there is a fixed contact form $\alpha$ for $(M,\xi)$ that satisfies the conclusions of Theorems \ref{thm: once-punctured disks} and \ref{thm: strips}.  Let $\mathcal{G}(\tau)$ be the number of times the chord $\tau$ of $\Lambda$ or $\widehat\Lambda$ intersects the page $S\times\{{1\over 2}\}$. By Theorem~\ref{thm: strips} the differential for strip Legendrian contact homology counts strips in the mapping torus of $(S,h)$, i.e., if there is a holomorphic strip $\widetilde u$ between two chords $\tau,\tau'$, then $\mathcal{G}(\tau)=\mathcal{G}(\tau')$. Hence there exists a direct sum decomposition of  the strip Legendrian chain complex $LC_{st}(\alpha,\Lambda\to\widehat\Lambda)$ into
$$LC_{st}(\alpha,\Lambda\to\widehat\Lambda)=\oplus_{m\in \Z^{\geq 0}} LC_{st}^{\mathcal{G}=m} (\alpha,\Lambda\to\widehat\Lambda).$$

Now by the properties of pseudo-Anosov maps (cf.\ \cite{FLP} and \cite[Theorem 14.24]{FM}), the geometric intersection number between $L$ and $\psi^m(L)$ grows exponentially at a rate of $\lambda^m$, where $\lambda$ is the stretch factor for the pseudo-Anosov map.  Hence there exist numbers $a>0$ and $b$ such that:
\begin{equation} \label{eqn: exp growth}
\dim(LCH_{st}^{\mathcal{G}=m}(\Lambda \rightarrow \widehat{\Lambda}))\geq e^{am+b}.
\end{equation}
We also remark that we may replace ``$\mathcal{G}=m$'' by the condition ``$\leq m$'' on the action.

The difficulty is that, instead of a single contact form $\alpha$, we have constructed a sequence of contact forms for which we control orbits up to a certain action. The actual argument runs through direct limits as detailed in the next subsection.

\subsection{Growth of the number of Reeb chords}

As indicated in the previous subsection we need to use a direct limit argument.  The following is the key definition for taking direct limits:

\begin{defn} \label{defn: exhaustive}
Fix a contact form $\alpha_0$ for $(M,\xi)$.
A sequence $(\alpha_i=G_i\alpha_0, L_i)$, $i=1,2,\dots$, of pairs of contact forms and real numbers is {\em exhaustive} if there exists a constant $C>1$ such that
\begin{equation} \label{eqn: exhaustive condition}
L_{i+1}> CN_i N_{i+1}L_i
\end{equation}
where $N_i=\sup \{ G_i(x), \tfrac{1}{G_i(x)}~|~ x\in M\}$.
\end{defn}

Suppose that $(M,\xi)$ is a contact $3$-manifold satisfying the conditions of Theorem \ref{thm: main}.
Let us fix a contact form $\alpha_0=\alpha_{\epsilon_0 ,\epsilon'_0}$ with $\epsilon_0,\epsilon'_0$ sufficiently small.  Then we have the following analog of  \cite[Proposition 10.2]{CH2}:

\begin{lemma} \label{lemma: sequence of contact forms}
If $(M,\xi)$ is a contact $3$-manifold satisfying the conditions of Theorem \ref{thm: main}, then given a sequence $L_i$, $i=1,2,\dots$, going to $\infty$, there exists a sequence of contact forms $\alpha_{\epsilon_i,\epsilon'_i}$ with $\epsilon_i,\epsilon_i'\to 0$ such that:
\be
\item The strip contact homology chain groups $LC_{st}^{\leq L_i}(\alpha_{\epsilon_i,\epsilon'_i}, \Lambda\to \widehat\Lambda)$ are well-defined.
\item There exists an isotopy $(\phi^i_s)_{s\in[0,1]}$ of $M$ such that $G_i \alpha_0 = (\phi^i_1)^* \alpha_{\epsilon_i,\epsilon'_i}$
and $\frac{1}{4} \leq G_i \leq 4$.
\ee
\end{lemma}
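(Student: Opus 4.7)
The plan is, for each $i$, to pick $(\epsilon_i,\epsilon_i')$ small enough relative to $L_i$ so that the control theorems of Section~4 provide the vanishing needed to make the trivial augmentation work up to action $L_i$; then to interpolate smoothly in the parameter space from $(\epsilon_0,\epsilon_0')$ to $(\epsilon_i,\epsilon_i')$ and apply Gray stability to produce the isotopy $\phi^i_s$ and the conformal factor $G_i$.

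For part~(1), I would recall from Section~\ref{subsection: linearizations} that the trivial map is an augmentation (so that $LC_{st}^{\leq L_i}$ is well-defined) provided there are no finite-energy holomorphic planes asymptotic to closed Reeb orbits of action $\leq L_i$ and no once-punctured holomorphic disks with boundary on $\R\times\Lambda$ or $\R\times\widehat\Lambda$ asymptotic to Reeb chords of action $\leq L_i$. The first vanishing is provided by \cite[Theorem 8.1]{CH2}, valid for $k\geq 2$, and the second by Theorem~\ref{thm: once-punctured disks}, valid for $k\geq 3$; both apply since $k\geq 5$. For each $i$, one chooses $(\epsilon_i,\epsilon_i')$ in the open set of parameters afforded by these two statements with $N=L_i$, shrunk further so that $\epsilon_i,\epsilon_i'\to 0$ as $i\to\infty$; one may also arrange nondegeneracy up to action $L_i$ by a final small perturbation at each step.

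For part~(2), I would use that by the construction of Section~\ref{subsection: construction of forms} the forms $\alpha_{\epsilon,\epsilon'}$ all have kernel $\xi$ and depend smoothly on the parameters. Linearly interpolating in the parameter space from $(\epsilon_0,\epsilon_0')$ to $(\epsilon_i,\epsilon_i')$ yields a smooth path $(\alpha^i_s)_{s\in[0,1]}$ of contact forms for $\xi$ with $\alpha^i_0=\alpha_0$ and $\alpha^i_1=\alpha_{\epsilon_i,\epsilon_i'}$. Gray stability then produces an isotopy $\phi^i_s$ of $M$ with $(\phi^i_s)^*\alpha^i_s=G_{i,s}\alpha_0$ for a positive smooth family $G_{i,s}$; evaluating at $s=1$ gives the isotopy and the conformal factor required by the lemma.

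The hard part will be the uniform estimate $\tfrac{1}{4}\leq G_i\leq 4$, which is the direct analog of \cite[Proposition 10.2]{CH2}. Outside the plug $S\times[-c,c]$ and a small neighborhood of the binding $K$, the contact form $\alpha_{\epsilon,\epsilon'}$ is independent of $(\epsilon,\epsilon')$, so on this region the path $(\alpha^i_s)$ is constant and $G_{i,s}\equiv 1$. Inside the plug the forms differ only in the coefficient of $\beta$ in $dt+\epsilon\beta$, and the local model near $K$ depends smoothly on small parameters; provided one starts with $\epsilon_0,\epsilon_0'$ sufficiently small, the pointwise ratio of $\alpha_{\epsilon,\epsilon'}$ with $\alpha_0$ on $M$ stays within $[\tfrac{1}{2},2]$ for all $\epsilon\leq\epsilon_0$ and $\epsilon'\leq\epsilon_0'$. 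The Moser vector field generating $\phi^i_s$ introduces a further bounded multiplicative error on pullback, controlled by the $C^0$-norm of the logarithmic derivative of $\alpha^i_s$ in $s$, and this error combines with the previous factor of $2$ to give the bound $\tfrac{1}{4}\leq G_i\leq 4$ after taking $\epsilon_0,\epsilon_0'$ small enough at the outset. This quantitative tracking, mirroring what is done in \cite[Proposition 10.2]{CH2}, is the technical heart of the proof.
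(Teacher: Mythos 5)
Your overall strategy matches the paper's: part~(1) follows from the vanishing results for planes (\cite[Theorem 8.1]{CH2}, $k\geq 2$) and for once-punctured disks (Theorem~\ref{thm: once-punctured disks}, $k\geq 3$), applied with $N=L_i$ and $(\epsilon_i,\epsilon_i')$ shrunk accordingly; and for part~(2) the paper itself only remarks that the result is the analog of \cite[Proposition 10.2]{CH2} and that the uniform bound $\tfrac{1}{4}\leq G_i\leq 4$ (rather than $4^{\pm i}$) comes from comparing $\alpha_{\epsilon_0,\epsilon_0'}$ to $\alpha_{\epsilon_i,\epsilon_i'}$ \emph{directly} instead of chaining through the intermediate forms — which is exactly the interpolation-from-$(\epsilon_0,\epsilon_0')$-to-$(\epsilon_i,\epsilon_i')$ idea at the heart of your sketch.

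One internal inconsistency worth flagging: you claim the forms $\alpha_{\epsilon,\epsilon'}$ \emph{all have kernel $\xi$} and then invoke Gray stability. These two statements work against each other: if the kernels literally coincided, no isotopy would be needed ($\phi^i_s=\mathrm{id}$ and $G_i=\alpha_{\epsilon_i,\epsilon_i'}/\alpha_0$ would already do the job). What actually happens — and what makes the Gray isotopy in \cite[Proposition 10.2]{CH2} necessary — is that the kernels $\ker\alpha_{\epsilon,\epsilon'}$ vary with the parameters (they are all isotopic to $\xi$, not equal), so the isotopy $\phi^i_s$ is a genuine Gray isotopy straightening the path of contact structures, and $G_i\alpha_0=(\phi^i_1)^*\alpha_{\epsilon_i,\epsilon_i'}$ is then automatically a conformal multiple of $\alpha_0$. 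Your subsequent quantitative outline (pointwise ratio control outside/inside the plug, Moser term contributing a further bounded factor) is plausible and consistent with the role of \cite[Lemmas 10.4 and 10.5]{CH2} cited by the paper, but it should be anchored to the fact that the kernels genuinely move.
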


We remark that the bound $\frac{1}{4^i}\leq G_i\leq 4^i$ that originally appears in \cite[Proposition 10.2]{CH2} can be improved to $\frac{1}{4} \leq G_i \leq 4$ by a slightly more careful use of \cite[Lemmas 10.4 and 10.5]{CH2} in the proof of \cite[Proposition 10.2]{CH2}.  Briefly, instead of comparing $\alpha_{\epsilon_0,\epsilon_0'}$ to $\alpha_{\epsilon_1,\epsilon_1'}$, $\alpha_{\epsilon_1,\epsilon_1'}$ to $\alpha_{\epsilon_2,\epsilon_2'}$, and so on, which gives the bound $\frac{1}{4^i}\leq G_i\leq 4^i$, we can directly compare $\alpha_{\epsilon_0,\epsilon_0'}$ to $\alpha_{\epsilon_i,\epsilon_i'}$, which gives the bound $\frac{1}{4}\leq G_i\leq 4$.

\begin{lemma}\label{lemma: criterion satisfied}
If $(M,\xi)$ is a contact $3$-manifold satisfying the conditions of Theorem \ref{thm: main}, then there exists an exhaustive sequence $(\alpha_i=G_i\alpha_0,L_i)$ that satisfies the conditions of Theorem~\ref{thm: criterion}.
\end{lemma}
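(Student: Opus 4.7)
The plan is to build the exhaustive sequence directly from Lemma~\ref{lemma: sequence of contact forms} and then verify the three conditions of Theorem~\ref{thm: criterion} one at a time, using the heuristic of Section~\ref{subsection: growth} to produce exponential growth and a standard cobordism-composition argument for injectivity.

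First I would fix $\alpha_0=\alpha_{\epsilon_0,\epsilon_0'}$ with $\epsilon_0,\epsilon_0'$ small enough for $\Lambda,\widehat\Lambda$ to sit inside the prescribed $t$-invariant slab of the open book, and choose $L_i\to\infty$ recursively so that the exhaustive inequality $L_{i+1}>C N_iN_{i+1}L_i$ of Definition~\ref{defn: exhaustive} is satisfied. Since Lemma~\ref{lemma: sequence of contact forms} provides $\tfrac14\leq G_i\leq 4$, we have $N_i\leq 4$, so it suffices to take e.g.\ $L_i=(16C+1)^i$ (or any fast enough sequence). This immediately gives condition (1) of Theorem~\ref{thm: criterion} with $\mathfrak c=4$.

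For condition (2), I apply Lemma~\ref{lemma: sequence of contact forms} to obtain contact forms $\alpha_i=\alpha_{\epsilon_i,\epsilon_i'}$ for which $LC_{st}^{\leq L_i}(\alpha_i,\Lambda\to\widehat\Lambda)$ is well-defined: this uses Theorem~\ref{thm: once-punctured disks} (no once-punctured disks of action $\leq L_i$, so the trivial augmentation exists for both $\Lambda$ and $\widehat\Lambda$), together with \cite[Theorem~8.1]{CH2} for the absence of planes, up to the chosen action threshold. Exponential growth follows from Section~\ref{subsection: growth}: Theorem~\ref{thm: strips} lets me decompose the chain complex according to the intersection number $\mathcal G$ with a fixed page, and by construction each chord $\tau$ contributing to $\mathcal G(\tau)=m$ corresponds to a lift of an intersection point of $L$ with $\psi^m(L)$, whose geometric intersection number grows like $\lambda^m$ by the pseudo-Anosov property. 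Translating $\mathcal G=m$ into an action bound via the fact that $\alpha_i$-action of a chord is controlled linearly by its intersection number with the page (up to the uniform factor $N_i\leq 4$), I obtain $\dim LCH_{st}^{\leq \ell}(\alpha_i,\Lambda\to\widehat\Lambda)\geq e^{a\ell+b}$ with universal constants $a>0$, $b$, valid for all $\ell\leq L_i$.

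For condition (3), for each $i\leq j$ I would build an exact symplectic cobordism $\mathcal W^i_j$ from $\alpha_j$ to $\alpha_i$ by interpolating the conformal factors, together with cylindrical SFT-admissible Lagrangian cobordisms over $\Lambda$ and $\widehat\Lambda$ using the isotopy $\phi_s^i$ of Lemma~\ref{lemma: sequence of contact forms} (this is the standard construction used in \cite[Section 10]{CH2} and in \cite{BEE}). Such cobordisms induce chain maps on the strip complexes, and the energy/action estimate for a holomorphic strip in $\mathcal W^i_j$ shows that a chord of $\alpha_i$-action $A$ can only be hit from chords of $\alpha_j$-action at most $N_iN_jA\leq 16A$. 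Injectivity on $LCH_{st}^{\leq L_i/\mathfrak d}$ then comes from composing $\Psi_{\mathcal W^i_j}$ with the cobordism map induced by the reverse cobordism from $\alpha_i$ to $\alpha_j$: the usual chain-homotopy argument from \cite{BEE,SFT} shows this composition equals the identity up to action $L_i/\mathfrak d$, provided $\mathfrak d$ is chosen large enough (e.g.\ $\mathfrak d=\mathfrak c^2=16$) to ensure that all intermediate chords stay within the action windows $\leq L_i$ and $\leq L_j$ where the differentials are well-defined. This forces $\Psi_{\mathcal W^i_j}$ to be injective on the truncated subcomplex.

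The main obstacle I anticipate is keeping the bookkeeping of action windows consistent across the three conditions: the exhaustive constant $C$, the uniform bound $\mathfrak c$, and the injectivity constant $\mathfrak d$ must be chosen so that cobordism compositions stay within the ranges in which Theorems~\ref{thm: once-punctured disks} and~\ref{thm: strips} apply. The improved bound $\tfrac14\leq G_i\leq 4$ of Lemma~\ref{lemma: sequence of contact forms} (rather than $4^i$) is what makes these choices possible with universal constants, and is the key technical input that lets the argument close up.
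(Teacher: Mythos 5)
Your proposal follows essentially the same route as the paper: take $L_i$ growing rapidly enough that $N_i\le 4$ forces exhaustiveness, invoke Lemma~\ref{lemma: sequence of contact forms} for conditions (1)--(2) together with the exponential-growth estimate of Section~\ref{subsection: growth}, and for condition (3) build cobordism maps $\Psi_{\mathcal W^i_j}$, compose with the reverse map, and appeal to homotopy invariance and functoriality.

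One correction: your sample constant $\mathfrak d=\mathfrak c^2=16$ is too small for the round-trip composition to close up. Each cobordism map rescales the action bound by as much as a factor of $\mathfrak c^2=16$ (the conformal factors on \emph{both} ends of a single cobordism contribute, since $\alpha_i$ and $\alpha_j$ each differ from $\alpha_0$ by a factor up to $4$). Composing $\Psi_{\mathcal W^i_j}$ with $\Psi_{\mathcal W^j_i}$ therefore blows up the action window by $\mathfrak c^4=256$, and one must start from $LCH_{st}^{\leq L_i/4^4}(\alpha_i)$ to land in $LCH_{st}^{\leq L_i}(\alpha_i)$; the paper uses $\mathfrak d=4^4$ accordingly. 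Since the statement only requires \emph{some} uniform $\mathfrak d$, this is an arithmetic slip rather than a structural gap, but the explicit bookkeeping of the two intermediate action windows is worth spelling out, because it is precisely where the improved bound $\tfrac14\le G_i\le 4$ (as opposed to the $4^i$ bound of \cite{CH2}) is used. Also note the cobordism $\mathcal W^i_j$ in the paper's notation induces a map $LCH_{st}(\alpha_i)\to LCH_{st}(\alpha_j)$, so describing it as ``from $\alpha_j$ to $\alpha_i$'' and then ``the reverse from $\alpha_i$ to $\alpha_j$'' has the endpoints swapped relative to the paper's convention.
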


\begin{proof}
(1) and (2). Pick $C>1$ and a sequence $L_i$ such that $L_{i+1}> 16 C  L_i$.  We then apply Lemma~\ref{lemma: sequence of contact forms} to obtain a sequence $\alpha_i=G_i\alpha_0$. Note that $$N_i:=\sup\{G_i(x),\tfrac{1}{G_i(x)}\}\leq 4$$ and $L_{i+1}> 16 C L_i$ implies that $L_{i+1}> C N_i N_{i+1} L_i$.  Hence we have an exhaustive sequence $(\alpha_i=G_i\alpha_0, L_i)$ to which Theorems~\ref{thm: once-punctured disks} and \ref{thm: strips} apply.  The strip Legendrian contact homology $LCH^{\leq L_i}_{st}(\alpha_i, \Lambda \rightarrow \widehat{\Lambda})$ is therefore well-defined and by Equation~\eqref{eqn: exp growth} there exist numbers $a>0$ and $b$ such that
\begin{equation}\label{eqrank}
\dim(LCH^{\leq  \ell}_{st}(\alpha_i,\Lambda \rightarrow \widehat{\Lambda})) \geq e^{a \ell+b}
\end{equation}
for each $i\in \mathbb{N}$ and $\ell\leq L_i$.  Hence $(\alpha_i=G_i\alpha_0,L_i)$ satisfies (1) and (2) of Theorem~\ref{thm: criterion} with ${\frak c}=4$.

(3).  By the argument of \cite[Section 10.3]{CH2}, if $i\leq j$, then the exhaustive condition (in particular Equation~\eqref{eqn: exhaustive condition}) and analogs of Theorems~\ref{thm: once-punctured disks} and \ref{thm: strips} for symplectic cobordisms imply that there exists an exact symplectic cobordism $\mathcal{W}^i_j$ and SFT-admissible exact Lagrangian cobordisms $\R\times\Lambda$ and $\R\times\widehat\Lambda$ that induce a map\footnote{The Lagrangians will be suppressed from the notation.}
\begin{equation}
\Psi_{\mathcal{W}^i_j}: LCH^{\leq L_i }_{st}(\alpha_i,\Lambda \rightarrow \widehat{\Lambda}) \to LCH^{\leq L_j}_{st}(\alpha_j,\Lambda \rightarrow \widehat{\Lambda}).
\end{equation}

In order to prove the injectivity statement, we consider the composition of cobordisms $\mathcal{W}^i_j$ and $\mathcal{W}^j_i$, whose induced maps are viewed as:
\begin{align}
\Psi_{\mathcal{W}^i_j}: & LCH^{\leq L_i/4^4 }_{st}(\alpha_i,\Lambda \rightarrow \widehat{\Lambda}) \to LCH^{\leq L_i/4^2}_{st}(\alpha_j,\Lambda \rightarrow \widehat{\Lambda});\\
\Psi_{\mathcal{W}^j_i}: & LCH^{\leq L_i/4^2}_{st}(\alpha_j,\Lambda \rightarrow \widehat{\Lambda}) \to LCH^{\leq L_i}_{st}(\alpha_i,\Lambda \rightarrow \widehat{\Lambda});
\end{align}
i.e., we have to restrict domains.
By the homotopy invariance and functoriality of cobordism maps, $ \Psi_{\mathcal{W}^i_i}= \Psi_{\mathcal{W}^j_i } \circ \Psi_{\mathcal{W}^i_j}$, which implies (3) with constant ${\frak d}=4^4$.
\end{proof}

\begin{prop} \label{growth}
If $(M,\xi)$ is a contact $3$-manifold satisfying the conditions of Theorem \ref{thm: criterion}, then there exist numbers $a>0$ and $b$
such that for every nondegenerate, $\Lambda\cup \widehat\Lambda$-nondegenerate contact form $\alpha=f_\alpha\alpha_0$ for $(M,\xi)$ we have
\begin{equation} \label{eqn: growth}
\#(\mathcal{T}^{\leq \mathsf{K}_\alpha \ell }_\alpha(\Lambda \rightarrow \widehat{\Lambda})) > e^{a\ell + b},
\end{equation}
where $\ell\in \R^+$ and  $\mathsf{K}_\alpha=\sup\{ f_\alpha(x), {1\over f_\alpha(x)}~|~ x\in M\}$.
\end{prop}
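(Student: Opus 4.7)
My plan is to sandwich the arbitrary contact form $\alpha$ between two rescaled members of the exhaustive sequence supplied by Theorem~\ref{thm: criterion}, and to transfer the exponential growth of strip Legendrian contact homology through a chain-level factorization of the standard cobordism map $\Psi_{\mathcal{W}^i_j}$. Writing $\alpha=f_\alpha\alpha_0$, so that $1/\mathsf{K}_\alpha\leq f_\alpha\leq\mathsf{K}_\alpha$, and using $1/{\frak c}\leq G_i\leq{\frak c}$ from Theorem~\ref{thm: criterion}(1), I would set $c_1=1/({\frak c}\mathsf{K}_\alpha)$ and $c_2=1/({\frak c}^2\mathsf{K}_\alpha^2)$. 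A direct pointwise check gives $c_2\alpha_j\leq c_1\alpha\leq\alpha_i$ for all $i,j$, so one has exact symplectic cobordisms $\alpha_i\succ_{ex}c_1\alpha\succ_{ex}c_2\alpha_j$, equipped with the trivial SFT-admissible cylindrical Lagrangians $\R\times\Lambda$ and $\R\times\widehat{\Lambda}$.

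Next I would set up augmentations. The trivial augmentation $\epsilon^j_{st}$ on the composite DGA of $c_2\alpha_j$ exists (scalar rescaling does not affect the absence of low-index planes and once-punctured disks guaranteed by Theorem~\ref{thm: criterion}(2) for $\alpha_j$). Pulling $\epsilon^j_{st}$ back through $c_1\alpha\succ_{ex}c_2\alpha_j$ yields an augmentation $\epsilon^\alpha$ on the DGA of $c_1\alpha$, for which the linearized chain complex $LC^{\leq L}_{\epsilon^\alpha}(c_1\alpha,\Lambda\to\widehat{\Lambda})$ is defined and whose dimension is $\#\mathcal{T}^{\leq L}_{c_1\alpha}(\Lambda\to\widehat{\Lambda})=\#\mathcal{T}^{\leq{\frak c}\mathsf{K}_\alpha L}_\alpha(\Lambda\to\widehat{\Lambda})$ via the tautological action rescaling $A\mapsto c_1 A$. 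Pulling $\epsilon^j_{st}$ back further through the whole cobordism $\alpha_i\succ_{ex}c_2\alpha_j$ should return the trivial augmentation on $\alpha_i$. With these compatible augmentations, the action--energy inequality and the DGA cobordism formalism produce a chain-level factorization respecting the action filtration
$$LC^{\leq L_i/{\frak d}}_{st}(\alpha_i,\Lambda\to\widehat{\Lambda})\xrightarrow{\Psi_1} LC^{\leq L_i/{\frak d}}_{\epsilon^\alpha}(c_1\alpha,\Lambda\to\widehat{\Lambda})\xrightarrow{\Psi_2} LC^{\leq L_i/{\frak d}}_{st}(c_2\alpha_j,\Lambda\to\widehat{\Lambda}),$$
whose composition, by homotopy invariance of exact symplectic cobordism maps with fixed ends (\cite{BEE,SFT}), is chain homotopic to $\Psi_{\mathcal{W}^i_j}$ composed with the tautological rescaling isomorphism $LCH^{\leq L_i/{\frak d}}_{st}(\alpha_j)\cong LCH^{\leq L_i/{\frak d}}_{st}(c_2\alpha_j)$.

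To extract the bound, I would use the exhaustive property and Lemma~\ref{lemma: sequence of contact forms} to pick $i$ with $L_i/{\frak d}$ within a bounded multiplicative factor of $\ell/{\frak c}$; since the ratio $L_{i+1}/L_i$ is uniformly bounded by the construction of the exhaustive sequence, this factor is absorbed in the final constants. By Theorem~\ref{thm: criterion}(2)--(3), $\Psi_{\mathcal{W}^i_j}$ is injective on $LCH^{\leq L_i/{\frak d}}_{st}(\alpha_i)$, a space of dimension $\geq e^{aL_i/{\frak d}+b}$, so the composition above is injective on the same subspace; this forces $\Psi_1$ itself to be injective there, giving
$$\#\mathcal{T}^{\leq\mathsf{K}_\alpha\ell}_\alpha(\Lambda\to\widehat{\Lambda})=\dim LC^{\leq L_i/{\frak d}}_{\epsilon^\alpha}(c_1\alpha,\Lambda\to\widehat{\Lambda})\geq e^{aL_i/{\frak d}+b}\geq e^{a'\ell+b'}$$
for new constants $a'>0$ and $b'$ independent of $\alpha$.

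The main obstacle is the pullback step: verifying that no index-$1$ once-punctured disks with positive asymptotic on $\Lambda\cup\widehat{\Lambda}$ (and no negative asymptotics) and no index-$1$ holomorphic planes asymptotic to Reeb orbits occur in the composed cobordism $\alpha_i\succ_{ex}c_2\alpha_j$; otherwise the pullback of $\epsilon^j_{st}$ would not equal the trivial augmentation on $\alpha_i$ and the factorization above would not descend from strip LCH. This is the cobordism analog of Theorems~\ref{thm: once-punctured disks} and \ref{thm: strips}, and should follow by extending the Rademacher-function leakage estimates from the symplectization to the exact symplectic cobordism, with $k\geq 5$ absorbing the extra leakage from the boundary arcs of the pseudoholomorphic curves lying on $\R\times(\Lambda\cup\widehat{\Lambda})$. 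A secondary, largely standard, verification is that the chain-homotopy identification of the composed cobordism map with $\Psi_{\mathcal{W}^i_j}$ can be arranged compatibly with the chosen augmentations and action filtrations.
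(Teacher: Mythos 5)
Your overall strategy matches the paper's: sandwich $\alpha$ between $\alpha_i$ (above) and $\alpha_j$ (below), factor the map $\Psi_{\mathcal{W}^i_j}$ through the linearized LCH of $\alpha$, and read off the chord count from the dimension of the middle chain group. However, there are two gaps in the way you extract the final bound and in how you handle the augmentation.

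First, your claim that the ratio $L_{i+1}/L_i$ is uniformly bounded is not justified. The exhaustive condition~\eqref{eqn: exhaustive condition} only gives a \emph{lower} bound $L_{i+1}>CN_iN_{i+1}L_i$ on the ratio; a sequence satisfying Theorem~\ref{thm: criterion} could have $L_i$ growing super-exponentially, and then your ``pick $i$ so that $L_i/\mathfrak{d}$ is within a bounded multiplicative factor of $\ell/\mathfrak{c}$'' step fails. The paper sidesteps this entirely by using the action filtration: the maps $\Psi_{\mathcal{W}^i(\alpha)}$ and $\Psi_{\mathcal{W}_j(\alpha)}$ decrease action (up to the explicit scaling factor), so for \emph{any} $\ell\leq L_i/\mathfrak{d}$ one restricts the whole diagram to the $\leq\ell$ filtration level, and the injectivity of $\Psi_{\mathcal{W}^i_j}$ on $LCH^{\leq L_i/\mathfrak{d}}_{st}(\alpha_i)$ (hence on the $\leq\ell$ subgroup) gives $\#\bigl(\mathcal{T}^{\leq\mathsf{K}\mathfrak{c}\ell}_\alpha\bigr)>e^{a\ell+b}$ for every such $\ell$, which after rescaling yields the statement for all $\ell\in\R^+$. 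You can repair your argument the same way: don't lock yourself into $\ell=\mathfrak{c}L_i/\mathfrak{d}$.

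Second, the ``main obstacle'' you flag -- that the pullback of the trivial augmentation through the composed cobordism must \emph{equal} the trivial augmentation on $\alpha_i$, and your proposed remedy of reproving cobordism versions of Theorems~\ref{thm: once-punctured disks} and~\ref{thm: strips} to rule out index-one curves -- is not how the paper handles it, and would overconstrain. It is enough that the pullback augmentation on $\alpha_i$ is \emph{chain homotopic} to the trivial one, since chain homotopic augmentations produce isomorphic linearized complexes. This follows from soft homotopy invariance: the glued cobordism $\mathcal{W}^i(\alpha)\cup\mathcal{W}_j(\alpha)$ is homotopic (with Lagrangians $\R\times(\Lambda\cup\widehat\Lambda)$) to the cobordism $\mathcal{W}^i_j$ already used in Theorem~\ref{thm: criterion}(3), whose pullback of the trivial augmentation is trivial up to chain homotopy because $\Psi_{\mathcal{W}^i_j}$ is a map between strip LCHs. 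No new Rademacher-function estimate is needed in the cobordism through $\alpha$; all the hard holomorphic-curve input has already been packaged into Theorem~\ref{thm: criterion}, which is exactly the point of stating that theorem separately.
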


\begin{proof}
Let $(\alpha_i,L_i)$ be the exhaustive sequence satisfying Theorem~\ref{thm: criterion} and let $\mathsf{K}=\mathsf{K}_\alpha=\sup\{ f_\alpha(x), {1\over f_\alpha(x)}~|~ x\in M\}$.

For each pair of numbers $i$ and $j$ there exist an exact symplectic cobordism $\mathcal{W}^i(\alpha)$ from $\alpha_i $ to $\alpha$ (together with $\R\times (\Lambda\cup \widehat\Lambda)$) and an exact symplectic cobordism $\mathcal{W}_j(\alpha)$ from $\alpha$ to $\alpha_j$ (together with $\R\times (\Lambda\cup \widehat\Lambda)$) which induce maps
\begin{align}
\Psi_{\mathcal{W}^i(\alpha)}:& LCH^{\leq \ell}_{st}(\alpha_i, \Lambda\to \widehat\Lambda)\to LCH^{\leq \mathsf{K} {\frak c} \ell }_\epsilon (\alpha, \Lambda\to\widehat\Lambda),\\
\Psi_{\mathcal{W}_j(\alpha)}:& LCH^{\leq \mathsf{K} {\frak c} \ell}_\epsilon(\alpha,\Lambda\to\widehat\Lambda)\to LCH^{\leq \mathsf{K}^2{\frak c}^2 \ell}_{st} (\alpha_j,\Lambda\to\widehat\Lambda),
\end{align}
where $\ell< L_i$  and $\epsilon$ is the pullback of the trivial augmentation via the cobordism $\mathcal{W}_j(\alpha)$. Note that the trivial augmentation for $(\alpha_i, \Lambda\to\widehat\Lambda)$ is chain homotopic to the pullback of $\epsilon$ via $\mathcal{W}^i(\alpha)$, and hence induce isomorphic linearizations.

Since $L_j\geq \mathsf{K}^2 {\frak c}^2 L_i$ for $j\gg i$, in view of Equation~\eqref{eqn: exhaustive condition}, we can view $\Psi_{\mathcal{W}_j(\alpha)}$ instead as a map
$$\Psi_{\mathcal{W}_j(\alpha)}: LCH^{\leq \mathsf{K} {\frak c} \ell}_\epsilon(\alpha,\Lambda\to\widehat\Lambda)\to LCH^{\leq L_j}_{st} (\alpha_j,\Lambda\to\widehat\Lambda).$$

The gluing of $\mathcal{W}^i(\alpha)$ and $\mathcal{W}_j(\alpha)$ yields an exact symplectic cobordism $\mathcal{W}^i_j(\alpha)$ which is homotopic to $\mathcal{W}^i_j$ as exact symplectic cobordisms; we are still using SFT-admissible exact Lagrangians $\mathbb{R}\times (\Lambda\cup \widehat{\Lambda})$.

By the homotopy invariance and functoriality of cobordism maps,
\begin{equation}
\Psi_{\mathcal{W}^i_j}=\Psi_{\mathcal{W}^i_j(\alpha)} = \Psi_{\mathcal{W}_j(\alpha) } \circ \Psi_{\mathcal{W}^i(\alpha)}.
\end{equation}
This implies that:
\begin{equation}
\dim (LCH^{\leq \mathsf{K} {\frak c}\ell}_{\epsilon}(\alpha,\Lambda \rightarrow \widehat{\Lambda})) \geq \dim( \Psi_{\mathcal{W}^i_j}(LCH^{\leq \ell}_{st}( \alpha_i,\Lambda \rightarrow \widehat{\Lambda}))).
\end{equation}

On the other hand, by Conditions (2) and (3) of Theorem~\ref{thm: criterion},
\begin{equation}\label{eqestimate}
\dim(\Psi_{\mathcal{W}^i_j}(LCH^{\leq \ell}_{st}(\alpha_i,\Lambda \rightarrow \widehat{\Lambda}))) \geq e^{a \ell + b}
\end{equation}
for $\ell\leq L_i/{\frak d}$. Hence, for $\ell\leq L_i/{\frak d}$,
\begin{equation}
\#(\mathcal{T}^{\leq \mathsf{K}{\frak c}\ell}_\alpha(\Lambda \rightarrow \widehat{\Lambda})) \geq \dim( \Psi_{\mathcal{W}^i_j}(LCH^{\leq \ell}_{st}( \alpha_i,\Lambda \rightarrow \widehat{\Lambda}))) > e^{a\ell + b}.
\end{equation}
Finally, after renaming $a,b$, Equation~\eqref{eqn: growth} follows.
\end{proof}

\subsection{Positivity of topological entropy}

The goal of this subsection is to prove Theorem~\ref{thm: criterion}.

\s\n {\em Nondegenerate case.} First suppose that $\alpha$ is a nondegenerate contact form for $(M,\xi)$ and the conditions of Theorem~\ref{thm: criterion} are satisfied.

Fix $\delta>0$ small. By the neighborhood theorem for Legendrian submanifolds there exist a tubular neighborhood $\mathcal{U}_\delta(\widehat{\Lambda})$
of $\widehat{\Lambda}$ which is disjoint from $\Lambda$ and admits a contactomorphism
$$\Psi:(\mathcal{U}_\delta(\widehat{\Lambda}), \xi |_{\mathcal{U}_\delta(\widehat{\Lambda})})\stackrel\sim \longrightarrow (S^1 \times \mathbb{D}^2, \ker (\cos (\theta) dx - \sin (\theta) dy )),$$ for coordinates $\theta \in S^1$ and $(x,y) \in \mathbb{D}^2$, such that
\begin{itemize}
\item $\Psi(\widehat{\Lambda}) = S^1 \times \{(0,0)\}$;
\item all Legendrian curves of the form $\widehat{\Lambda}_{(x,y)}:=\Psi^{-1}(S^1 \times \{(x,y)\}) $ are $\delta$-close to $\widehat{\Lambda}$ in the $C^3$-topology.
\end{itemize}

The coordinates $(\theta,x,y)$ induce the structure of a Legendrian fibration $(\mathcal{U}_\delta(\widehat{\Lambda}), \xi |_{\mathcal{U}_\delta(\widehat{\Lambda})})$ with fibers $\widehat{\Lambda}_{(x,y)}$. Since $\widehat{\Lambda}_{(x,y)}$ is $\delta$-close to $\widehat{\Lambda}$, there exists a contactomorphism $\widetilde{\phi}_{(x,y)} : (M,\xi) \stackrel\sim\to (M,\xi)$  which is $\delta$-small in the $C^2$-topology, coincides with the identity outside $\mathcal{U}_\delta(\widehat{\Lambda})$, and satisfies $\widetilde{\phi}_{(x,y)} (\widehat{\Lambda}) = \widehat{\Lambda}_{(x,y)}$ for all $(x,y)$ in a sufficiently small ball around $(0,0)$.

The Reeb flows of $\widetilde{\phi}_{(x,y)}^* \alpha$ and $\alpha$ are conjugate by the map $\widetilde{\phi}_{(x,y)}$. Moreover this conjugation induces a bijection from $\mathcal{T}_{\widetilde{\phi}_{(x,y)}^*\alpha}(\Lambda \rightarrow \widehat{\Lambda}))$ to  $\mathcal{T}_\alpha(\Lambda \rightarrow \widehat{\Lambda}_{(x,y)})$, since it takes $\widetilde{\phi}_{(x,y)}^*\alpha$-Reeb chords from $\Lambda$ to $\widehat{\Lambda}$ to $\alpha$-Reeb chords $\Lambda$ to $\widehat{\Lambda}_{(x,y)}$. Because $\widetilde{\phi}_{(x,y)}^* \alpha$ is $\delta$-small in the $C^2$-topology we conclude that
\begin{equation}
\mathsf{K}':=\mathsf{K}_{\widetilde{\phi}_{(x,y)}^* \alpha} < \mathsf{K}_{\alpha} + \delta.
\end{equation}

By the argument of \cite[Lemma 3]{Al}, there exist a set $\mathcal{V}(\alpha) \subset \mathbb{D}^2$ of full measure with respect to the Lebesgue measure, such that for all $(x,y) \in \mathcal{V}(\alpha)$ the contact form $\alpha$ is $\Lambda\cup\widehat{\Lambda}_{(x,y)}$-nondegenerate. Applying Proposition \ref{growth} to $\widetilde{\phi}_{(x,y)}^* \alpha$, we obtain
\begin{equation}
\#(\mathcal{T}^{\leq \mathsf{K}' \ell}_\alpha(\Lambda \rightarrow \widehat{\Lambda}_{(x,y)})) =  \#(\mathcal{T}^{\leq\mathsf{K}' \ell}_ {\widetilde{\phi}_{(x,y)}^*\alpha}(\Lambda \rightarrow \widehat{\Lambda})) >e^{a\ell +b},
\end{equation}
or
\begin{equation}
\#(\mathcal{T}^{\leq \ell}_\alpha(\Lambda \rightarrow \widehat{\Lambda}_{(x,y)})) \geq  e^{\frac{a}{\mathsf{K}'} \ell +b}>  e^{\frac{a}{\mathsf{K}_{ \alpha} + \delta} \ell +b}
\end{equation}
for all $(x,y) \in \mathcal{V}(\alpha)$. Reasoning as in the proof of \cite[Theorem 1]{Al} we then conclude that
\begin{equation}
h_{top}(\phi_\alpha) \geq \tfrac{a}{\mathsf{K}_\alpha + \delta}.
\end{equation}
Since $\delta$ can be taken arbitrarily small we have
\begin{equation}
h_{top}(\phi_\alpha) \geq \tfrac{a}{\mathsf{K}_\alpha}.
\end{equation}

\s\n
{\em General case.}
Our reasoning so far implies that for any smooth contact form $\alpha$ for $(M,\xi)$ which is $\Lambda\cup\widehat{\Lambda}$-nondegenerate we have $h_{top}(\phi_\alpha) \geq \frac{a}{\mathsf{K}_\alpha}$. This conclusion can be extended to all smooth $\alpha$ for $(M,\xi)$ by observing that:
\begin{itemize}
\item the set of $C^\infty$-smooth contact forms for $(M,\xi)$ which are $\Lambda\cup\widehat{\Lambda}$-nondegenerate is dense in the set  of $C^\infty$-smooth contact forms for $(M,\xi)$;
\item $h_{top}(\phi_\alpha)$ depends continuously on the contact form $\alpha$ by a combination of the results of Newhouse \cite{New} and Katok \cite{Katok}; and
\item $\mathsf{K}_\alpha$ depends continuously on the contact form $\alpha$.
\end{itemize}

This completes the proof that every Reeb flow of $(M,\xi)$ satisfying the conditions of Theorem~\ref{thm: criterion} has positive topological entropy.

\begin{proof}[Proof of Theorem~\ref{thm: main}]
Follows from Lemma~\ref{lemma: criterion satisfied} and Theorem~\ref{thm: criterion}.
\end{proof}

\end{document}